\documentclass[11pt,a4paper,british]{amsart}

\newcommand{\pdfname}{A27}

\usepackage[arxiv,defs]{tmss}

\usepackage{thmtools,thm-restate,comment}

\date{11th May 2026}

\numberwithin{equation}{section}

\newcommand{\weyl}{{}^{[g]}\nabla}
\newcommand{\higgs}{A_{[g]}}
\newcommand{\levi}{{}^g\nabla}
\newcommand{\llangle}{\langle\!\langle}
\newcommand{\rrangle}{\rangle\!\rangle}

\newcommand{\pvar}{\left.\tfrac{\partial}{\partial t}\right|_{t=0}}

\newcommand{\mal}{\circledast_g\!}

\renewcommand*{\d}{\mathrm{d}}

\renewcommand*{\div}{\operatorname{div}}
\newcommand*{\sym}{\operatorname{Sym}}
\newcommand*{\grad}{\operatorname{grad}}
\newcommand*{\Vol}{\operatorname{Vol}}
\newcommand*{\Ric}{\operatorname{Ric}}
\newcommand{\proj}{\mathfrak{p}}
\newcommand{\tg}{\tilde{g}}
\newcommand{\loc}{\Sigma_{[g]}}

\DeclareMathOperator{\con}{Con}
\newcommand{\Riem}{\mathrm{Riem}(M)}

\title[A Variational Problem for Riemannian Metrics]{Prescribing geodesics and a variational problem for Riemannian metrics}
\author[T.~Mettler]{Thomas Mettler}
\address{Faculty of Mathematics and Computer Science, UniDistance Suisse, Brig, Switzerland}
\email{thomas.mettler@fernuni.ch; mettler@math.ch}

\begin{document}

\begin{abstract}
Given a prescription of unparametrised paths on a manifold $M$, one path for each tangent direction, we may ask whether these paths agree with the geodesics of a Riemannian metric on $M$. Generically, this is not the case. Motivated by this fact, we introduce a non-negative functional $\mathcal{E}$ on the space of Riemannian metrics on $M$ so that $\mathcal{E}(g)=0$ if and only if the geodesics of the metric $g$ agree with the prescribed paths. We compute the variational equations for $\mathcal{E}$ and show that the conformal variational equation is, perhaps surprisingly, of Yamabe type. This allows us to obtain existence results for conformally critical points of $\mathcal{E}$. In particular, in the surface case, every conformal class contains a conformally critical metric, unique up to homothety. As a by-product, we establish that the Blaschke metric of a properly convex projective surface is a critical point for $\mathcal{E}$.  
\end{abstract}

\maketitle

\section{Introduction}

A prescription of unparametrised paths on a manifold $M$, one path for each tangent direction, specifies the geometric shape of a family of curves but not the speed at which one travels along them. It is natural to require these paths to be the geodesics of a torsion-free connection on $TM$, since the geodesics of a Riemannian metric are precisely those of its Levi-Civita connection. Such a prescription is encoded by a \emph{projective structure}: an equivalence class $\proj$ of torsion-free connections on $TM$, where two such connections are equivalent if they have the same geodesics up to parametrisation. Here by a geodesic for $\nabla \in \proj$ we mean an immersed curve $\gamma : I \to M$ so that $\nabla_{\dot{\gamma}}\dot{\gamma}$ vanishes identically. A manifold $M$ equipped with a projective structure will be called a \emph{projective manifold}. A projective structure $\proj$ is called \emph{metrisable} if there exists a Riemannian metric $g$ on $M$ whose Levi-Civita connection $\levi$ is an element of $\proj$. A generic projective structure is not metrisable, and necessary and sufficient conditions for metrisability of a two-dimensional projective structure were obtained by Bryant, Dunajski and Eastwood \cite{MR2581355}.

For a closed oriented projective manifold $(M,\proj)$ of dimension $n\geqslant 2$, we introduce a functional $\mathcal{E}$ on the space $\Riem$ of Riemannian metrics on $M$ as follows. Given a representative $\nabla \in \proj$, the difference $\nabla-\levi$ is a $1$-form on $M$ with values in the endomorphisms of $TM$; we write $(\cdot)_0$ for the part of such a $1$-form taking values in the trace-free endomorphisms of $TM$ and define
\[
	\mathcal{E} : \mathrm{Riem}(M) \to \R, \qquad g \mapsto \Vol_{M}(g)^{(2-n)/n}\int_{M} |(\nabla-\levi)_0|^2_g d\mu_g,
\]
where $d\mu_g$ denotes the volume form of $g$, $\Vol_M(g)$ the volume of $M$ with respect to $d\mu_g$ and $|\,\cdot\,|_g$ the point-wise norm induced by $g$ on the fibres of $S^2(T^*M)\otimes TM$. It turns out that the non-negative functional $\mathcal{E}$ depends only on the projective equivalence class of $\nabla$ and, moreover, that $\mathcal{E}(g)=0$ for $g \in \Riem$ if and only if $\proj$ is metrisable by the metric $g$. The presence of the factor $\Vol_{M}(g)^{(2-n)/n}$ makes $\mathcal{E}$ invariant under homothety, that is, $\mathcal{E}(cg)=\mathcal{E}(g)$ for all positive constants $c$ and all metrics $g \in \Riem$.

Setting $\Phi_g:=\left(\nabla-\levi\right)_0$, the variational equations for $\mathcal{E}$ take the form (see \cref{prop:vareq} for details) 
\[
T_g(\Phi_g)-2\ell^*_g(\Phi_g)=kg,
\]
where $k$ is a constant and $T_g(\Phi_g) \in \Gamma(S^2(T^*M))$ is a purely algebraic term in $\Phi_g$. The map $\ell^*_g$ is the formal adjoint of a first-order differential operator $\ell_g$ obtained from the linearisation of the map $g\mapsto\levi$, which assigns to a metric its Levi-Civita connection.

An important class of (in general) non-metrisable projective surfaces carrying a distinguished metric -- the so-called \emph{Blaschke metric} -- is given by the properly convex projective surfaces. They are of particular interest due to their connection to higher Teichm\"uller theory. For background we refer to the landmark paper by Hitchin \cite{MR1174252}; see also \cite{MR3966798} for a recent survey. Given a Riemann surface structure on $M$, every holomorphic cubic differential $C$ determines a Riemannian metric $g$ -- the Blaschke metric -- via Hitchin's self-duality equations \cite{MR887284}, and the pair $(g,C)$ encodes a properly convex projective structure on $M$ \cite{MR1828223,MR2402597}. Alternatively, the Blaschke metric arises via a hyperbolic affine sphere over the universal cover of $M$, obtained from a Monge--Amp\`ere equation solved by Cheng--Yau \cite{MR859275}; see \cite{MR2743442} for a survey. Our first main result complements these pictures by relating the Blaschke metric to a variational principle intrinsic to the projective structure:

\begin{restatable}{mainthm}{maintheoremone}\label{thm:maintheoremone}
Let $(M,\proj)$ be a closed oriented properly convex projective surface of negative Euler characteristic. Then the Blaschke metric of $\proj$ is a critical point for $\mathcal{E}$.  
\end{restatable}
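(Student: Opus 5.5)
The plan is to verify the variational equation $T_g(\Phi_g)-2\ell^*_g(\Phi_g)=kg$ of \cref{prop:vareq} directly for the Blaschke metric $g$. I will use the standard description of a properly convex projective surface in terms of $(g,C)$. Fix the Riemann surface structure determined by the conformal class of $g$ and the orientation. By Labourie and Loftin, $\proj$ is represented by the connection
\[
\nabla=\levi+A,
\]
where $A$ is the $\mathrm{End}(TM)$-valued $1$-form with $g(A(X)Y,Z)=\mathrm{Re}\,C(X,Y,Z)$. Here $C$ is a holomorphic cubic differential, and the Blaschke metric satisfies Wang's equation $K_g=-1+2|C|_g^2$ (up to normalising constants). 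Since $\mathrm{Re}\,C$ is totally symmetric and trace-free in every pair of indices, $A$ already takes values in trace-free endomorphisms. Hence $\Phi_g=A$, identified with the totally symmetric, totally trace-free cubic form $\mathrm{Re}\,C$.

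The first step is to compute $T_g(\Phi_g)$. It is algebraic and quadratic in $\Phi_g$, so it is built from the contraction $\Phi_{ikl}\Phi_j{}^{kl}$ and from $|\Phi|^2 g$. For a trace-free symmetric cubic form in dimension two one has the identity $\Phi_{ikl}\Phi_j{}^{kl}=\tfrac12|\Phi|_g^2\,g_{ij}$. This holds because the real part of a cubic differential is the real part of $c\,dz^3$, which is pointwise $\mathrm{SO}(2)$-equivariant, and the only invariant symmetric $2$-tensor is a multiple of $g$. So $T_g(\Phi_g)=f\,g$ for some function $f$ proportional to $|C|^2_g$.

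The second step is to compute $\ell_g^*(\Phi_g)$. Since $\ell_g(h)$ is the linearisation of the Levi-Civita connection, $\ell_g(h)_{ij}{}^k=\tfrac12 g^{kl}(\nabla_i h_{jl}+\nabla_j h_{il}-\nabla_l h_{ij})$. Pairing with the totally symmetric $\Phi$ and integrating by parts gives $\ell^*_g(\Phi)_{ij}=-\tfrac12\nabla^k\Phi_{ijk}$ up to symmetrisation and constants, which is essentially the divergence of $\Phi$. For $\Phi=\mathrm{Re}\,C$ with $C$ holomorphic, the Cauchy--Riemann equations $\bar\partial C=0$ are equivalent to $\Phi$ being divergence-free with respect to $g$. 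This is a conformally invariant statement in two dimensions. Hence $\ell_g^*(\Phi_g)=0$.

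The remaining and main obstacle is that the right-hand side $kg$ requires $f$ to be constant, whereas $|C|^2_g$ is generally not constant. So I must check how the trace part actually enters \cref{prop:vareq}. In dimension two the factor $\Vol_M(g)^{(2-n)/n}$ is $1$, and the functional $\int|\Phi|^2_g\,d\mu_g$ has pointwise conformal weight zero for a $(0,3)$ tensor paired via $(2,1)$ indices. I expect that the trace of $T_g(\Phi_g)$ cancels against the volume-form variation, so that in dimension two $T_g(\Phi_g)$ is in fact trace-free, and therefore vanishes by the identity above. This would give the equation with $k=0$.

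I will verify this carefully by tracking the trace of $T_g$ from its derivation. The variation $h\mapsto \frac{d}{dt}|\Phi|^2_{g+th}$ contributes $-\Phi_{ikl}\Phi_j{}^{kl}$-type terms, the $TM$-index contributes $+\Phi_{kli}\Phi^{kl}{}_j$, and the volume form contributes $\tfrac12|\Phi|^2 g$. I will also account for the fact that $\Phi_g$ itself depends on $g$ through the trace-free projection, but this contribution is orthogonal to $\Phi$ and so drops out. Should a nonzero multiple $c\,|C|^2_g\,g$ survive, I would instead reexamine whether the $\ell^*$ term contributes $\nabla^k\nabla^l$ of something, and use Wang's equation to see that the combination is constant. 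I expect, however, that the conformal invariance argument gives $k=0$ directly.
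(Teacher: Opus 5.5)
Your proposal is correct and follows the paper's proof. Both arguments use the same three steps:
\begin{enumerate}
\item $\Phi_g=\alpha$.
\item $T_g(\alpha)$ vanishes. Pointwise $\mathrm{SO}(2)$-equivariance forces it to be a multiple of $g$, and it is trace-free. Trace-freeness is the paper's identity $\tr_g T_g(\Psi)=(\tfrac n2-1)|\Psi|^2_g$, which is exactly your conformal-weight-zero observation.
\item $2\ell^*_g(\alpha)=-\div_g\alpha$ vanishes by holomorphicity of $C$.
\end{enumerate}
The hesitation in your last two paragraphs, including the fallback via Wang's equation, is unnecessary. The constant in \cref{prop:vareq} is $k=\tfrac{n-2}{2n}\Vert\Phi_g\Vert_g^2/\Vol_M(g)$, which is zero for $n=2$, so the equation to verify is simply $T_g(\alpha)-2\ell^*_g(\alpha)=0$.
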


We now turn to conformal variations of $\mathcal{E}$. In two dimensions, solving the variational equations for conformal variations is equivalent to solving a Poisson equation, and this allows us to prove:

\begin{restatable}{mainthm}{maintheoremtwo}\label{thm:maintheoremtwo}
Let $(M,\mathfrak{p})$ be a closed oriented projective surface. Then every conformal structure on $M$ contains a conformally critical metric, unique up to homothety. 
\end{restatable}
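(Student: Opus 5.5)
The plan is to reduce the conformal variational problem on a fixed conformal class to a linear elliptic equation for the conformal factor. Fix a background metric $g$ in the given conformal class and write an arbitrary metric in that class as $\hat g=e^{2u}g$ with $u\in C^\infty(M)$. Since $n=2$, the normalising factor $\Vol_M(g)^{(2-n)/n}$ equals $1$, so $\mathcal{E}(e^{2u}g)=\int_M|\Phi_{\hat g}|^2_{\hat g}\,d\mu_{\hat g}$. Two preliminary observations do most of the work.
\begin{enumerate}
\item The integrand is pointwise conformally invariant. A section of $S^2(T^*M)\otimes TM$ has $|\cdot|^2_{\hat g}=e^{-2u}|\cdot|^2_g$, while $d\mu_{\hat g}=e^{2u}d\mu_g$ in dimension two.
\item The dependence of $\Phi_{\hat g}$ on $u$ is affine in $\d u$. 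This follows from the conformal change formula
\[
{}^{\hat g}\nabla={}^g\nabla+\d u\otimes\mathrm{Id}+\mathrm{Id}\otimes\d u-g\otimes\grad_g u .
\]
\end{enumerate}

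Next I use that $\mathcal{E}$, and indeed $\Phi$, depends only on the projective class. Terms of the form $\alpha\otimes\mathrm{Id}+\mathrm{Id}\otimes\alpha$ are projectively trivial, so they can be absorbed into the choice of $\nabla\in\proj$, equivalently killed after taking the appropriate trace-free part. One should check directly that this trace-free part annihilates them, or else change the representative $\nabla$ accordingly. This gives
\[
\Phi_{e^{2u}g}=\Phi_g+B_g(\d u),\qquad B_g(\xi):=\bigl(g\otimes \xi^{\sharp}\bigr)_0 ,
\]
possibly up to sign and a harmless multiple of the projectively trivial terms. Here $B_g$ is a linear algebraic map $T^*M\to S^2(T^*M)\otimes TM$, and $\xi^\sharp$ denotes the $g$-dual vector. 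The key algebraic check is that $B_g$ is injective and, by $SO(2)$-equivariance, satisfies $|B_g(\xi)|_g^2=c\,|\xi|^2_g$ for a universal constant $c>0$. Injectivity holds because tracing $B_g(\xi)$ against a suitable index recovers a nonzero multiple of $\xi$. I expect getting this trace-free/projective bookkeeping exactly right, so that $B_g$ is well defined on $\proj$ and genuinely nonzero, to be the main obstacle; the rest is soft.

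With this in hand,
\[
\mathcal{E}(e^{2u}g)=\int_M|\Phi_g|^2_g\,d\mu_g+2\int_M\langle B_g^*\Phi_g,\d u\rangle_g\,d\mu_g+c\int_M|\d u|^2_g\,d\mu_g ,
\]
where $B_g^*\Phi_g$ is a $1$-form built algebraically from $\Phi_g$. This is a strictly convex quadratic functional of $u$ modulo constants. Its Euler--Lagrange equation is the Poisson equation
\[
c\,\Delta_g u=\div_g\bigl(B_g^*\Phi_g\bigr)^\sharp ,
\]
up to sign convention. The right-hand side has zero mean on the closed surface $M$ by the divergence theorem, so by Fredholm theory for $\Delta_g$ there is a smooth solution $u$, unique up to an additive constant. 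Thus $e^{2u}g$ is conformally critical. Conversely, any conformally critical metric $e^{2v}g$ gives a solution $v$ of the same equation, so $v-u$ is constant. Hence the critical metric is unique up to homothety, and by convexity it is the minimiser of $\mathcal{E}$ on the conformal class.

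This should agree with the conformal trace of the variational equations in \cref{prop:vareq}, which serves as a consistency check.
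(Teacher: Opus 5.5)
Your argument is correct. It reaches the same Poisson equation as the paper, but by a different derivation.

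\textbf{The paper's route.} The paper specialises the general conformal Euler--Lagrange equation of \cref{cor:confvar} to $n=2$, where it collapses to $\div_g X_g=0$. It then uses the transformation law \eqref{eq:confchangediv}, $\div_{\tg}X_{\tg}=\e^{-2f}(\div_g X_g-\Delta_g f)$, to turn the search for a critical metric in $[g]$ into solving $\Delta_g f=\div_g X_g$.

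\textbf{Your route.} You restrict $\mathcal{E}$ to the conformal class and expand it directly. Your bookkeeping worries resolve cleanly:
\begin{enumerate}
\item Since $(\sym(\alpha))_0=0$, one gets exactly $\Phi_{\e^{2u}g}=\Phi_g+(g\otimes\grad_g u)_0$, with no sign ambiguity.
\item The constant is $c=m=4/3$.
\item By \eqref{eqn:sondecomp}, $B_g^*\Phi_g=(\tr_g\Phi_g)^\flat=mX_g^\flat$.
\end{enumerate}
Integrating \eqref{eq:confchangehiggs} then gives
\[
\mathcal{E}(\e^{2u}g)=\Vert\Phi_g\Vert_g^2+2m\int_M X_g(u)\,d\mu_g+m\int_M|\d u|^2_g\,d\mu_g .
\]
Its Euler--Lagrange equation is the same $\Delta_g u=\div_g X_g$.

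You leave one step implicit: criticality of $u\mapsto\mathcal{E}(\e^{2u}g)$ is the same as $\mathcal{E}'_{\hat g}(f\hat g)=0$ for all $f$. This is just the chain rule, since $\hat g+tf\hat g=\e^{2(u+\frac12\log(1+tf))}g$.

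\textbf{Comparison.} Your approach is self-contained in dimension two and bypasses the stress-energy tensor and $\ell^*_g$ entirely. It also gives strictly more: the conformally critical metric is the global minimiser of $\mathcal{E}$ on $[g]$, and uniqueness up to homothety follows from strict convexity modulo constants. The paper's approach costs nothing extra, because \cref{cor:confvar} is needed anyway. It is also the one that generalises: for $n\geqslant 3$ the volume normalisation and the weight $\e^{(n-2)u}$ destroy the quadratic structure. One is then led instead to the Yamabe-type equation of \cref{sub:the_projective_conformal_laplacian}.
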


A surprising aspect of $\mathcal{E}$ is its close connection to the Yamabe equation \cite{MR125546} and to the Einstein--Hilbert functional, which assigns to a metric $g$ its volume-normalised total scalar curvature $\Vol_{M}(g)^{(2-n)/n}\int_{M}S_g d\mu_g$. To explain this connection, we use \cite[Thm.~2.4]{MR4201191}, which shows that the choice of a conformal structure $[g]$ on a projective manifold $(M,\proj)$ distinguishes a unique torsion-free connection $\weyl$ preserving $[g]$ -- a so-called \emph{Weyl connection} -- and a $1$-form $\higgs$ on $M$ with values in the endomorphisms of $TM$ so that $\weyl+\higgs$ lies in $\proj$.

For a metric $g$ on $(M,\proj)$, we define the \emph{conformal defect} as
\[
	V_g=\tfrac{(n+1)(n-2)}{(n+2)}|\higgs|^2_g-\tr_g\Ric\left(\weyl\right),
\]
where $\Ric\left(\weyl\right)$ denotes the Ricci curvature of $\weyl$. The motivation for introducing $V_g$ is its relevance in finding conformally critical metrics for $\mathcal{E}$. Recall that, for $n\geqslant 3$ the conformal Laplacian of $g$ is defined as 
 \[
 L_g:=4\left(\frac{n-1}{n-2}\right)\Delta_g + S_g.
 \]
 Using $V_g$, we define the \emph{projective-conformal Laplacian} $\mathcal{L}_g=L_g+V_g$ and show that finding a metric in $[g]$ that is conformally critical for $\mathcal{E}$ amounts to solving an equation of Yamabe type
 \[
  \mathcal{L}_g u=C u^{(n+2)/(n-2)}
 \]
 for some constant $C$ and smooth positive function $u$ on $M$.

Under a conformal change $g \mapsto \tilde{g}=\exp(2f)g$, the conformal defect changes as $V_{\tg}=\exp(-2f)V_g$. As a consequence of this, it follows that the \emph{projective scalar curvature} 
\[
	\mathcal{S}_g:=S_g+V_g
\]
transforms exactly as the scalar curvature under a conformal change $g\mapsto \tg$. Furthermore, we have the identity 
\[
	\mathcal{E}(g)=\tfrac{n+2}{(n+1)(n-2)}\Vol_{M}(g)^{(2-n)/n}\int_{M}\mathcal{S}_g d\mu_g,
\]
showing that our functional closely resembles the Einstein--Hilbert functional. We define the \emph{conformal locus} of $[g]$ as $\loc:=\left\{p \in M\,|\, V_g(p)=0\right\}$. Note that $\loc$ is a conformal invariant of $[g]$. Building on the solution of the Yamabe problem \cite{MR240748,MR0433500,MR788292} and the theory of elliptic equations of Yamabe type (see in particular \cite{MR2148873}), we obtain:
\begin{restatable}{mainthm}{maintheoremthree}\label{thm:maintheoremthree}
Let $(M,\proj)$ be a closed oriented projective manifold of dimension $n\geqslant 3$ and $[g]$ a conformal structure on $M$. Suppose that either $\loc=M$, or that $n\geqslant 4$ and there exists $p_0\in M$ with $V_{\tg}(p_0)<0$ for some (and hence any) $\tg \in [g]$. Then $[g]$ contains a conformally critical metric for $\mathcal{E}$.
\end{restatable}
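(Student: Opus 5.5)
The plan is to reduce \cref{thm:maintheoremthree} to a Yamabe-type problem via the reformulation stated above: a metric $u^{4/(n-2)}g\in[g]$ is conformally critical for $\mathcal{E}$ if and only if $\mathcal{L}_g u = C u^{(n+2)/(n-2)}$ for a constant $C$ and a smooth positive $u$. Since $\mathcal{S}_g=S_g+V_g$ transforms exactly like scalar curvature, this equation says that $\tg=u^{4/(n-2)}g$ has constant projective scalar curvature $\mathcal{S}_{\tg}=C$. By the identity $\mathcal{E}(g)=\tfrac{n+2}{(n+1)(n-2)}\Vol_M(g)^{(2-n)/n}\int_M\mathcal{S}_g\,d\mu_g$, the restriction of $\mathcal{E}$ to $[g]$ is, up to a positive constant, the functional
\[
Q_g(u)=\frac{\int_M \big(4\tfrac{n-1}{n-2}|\d u|_g^2+(S_g+V_g)u^2\big)\,d\mu_g}{\big(\int_M u^{2n/(n-2)}d\mu_g\big)^{(n-2)/n}}.
\]
This is the Yamabe quotient with $S_g$ replaced by the potential $S_g+V_g$. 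I will therefore find a minimiser of $Q_g$ over positive functions.

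\emph{Case $\loc=M$.} Here $V_g\equiv 0$, so $\mathcal{L}_g=L_g$ and the equation is exactly the Yamabe equation. The solution of the Yamabe problem (Yamabe, Trudinger, Aubin, Schoen) gives a metric of constant scalar curvature in $[g]$. It is then conformally critical.

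\emph{Case $n\geqslant 4$ and $V_g(p_0)<0$.} I follow the Aubin strategy for Schr\"odinger-type Yamabe operators $\Delta+h$ with smooth potential $h=S_g+V_g$, as in Hebey--Vaugon and \cite{MR2148873}. Let $\mu=\inf Q_g$ and let $K$ be the sharp Sobolev constant, so that the Euclidean Yamabe value is $\mu(S^n)$. Subcritical approximation shows that if $\mu<\mu(S^n)$, then minimisers of the subcritical problems converge to a positive smooth solution. The elliptic regularity and the strong maximum principle needed for this work for any smooth potential.

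The main step is the strict inequality $\mu<\mu(S^n)$. If $\mu\leqslant 0$ it is automatic, so assume $\mu>0$. I use test functions concentrated at $p_0$, namely Aubin bubbles $u_\varepsilon=\eta\,(\varepsilon/(\varepsilon^2+r^2))^{(n-2)/2}$ in $g$-normal coordinates. The key simplification is that $V$ is conformally covariant of weight $-2$. Hence I may first pick $\tg\in[g]$ that is conformally normal at $p_0$ (Lee--Parker), so that in the expansion of the Yamabe part the scalar curvature $S_{\tg}(p_0)$ and the Weyl-tensor terms enter only at a favourable order. Its contribution is then at worst $\mu(S^n)$ plus a nonpositive correction, as in Aubin's computation. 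The potential contributes $\int V_{\tg}u_\varepsilon^2 = V_{\tg}(p_0)\,c_n\varepsilon^2+o(\varepsilon^2)$ for $n\geqslant 5$, and $V_{\tg}(p_0)\,c\,\varepsilon^2|\log\varepsilon|$ for $n=4$. Since $V_{\tg}(p_0)<0$, this term strictly lowers $Q$ at leading order compared with the curvature remainders, which are $O(\varepsilon^4)$ (times a log for $n=6$) once $S_{\tg}$ and the trace terms vanish at $p_0$. This is exactly why $n\geqslant 4$ is required: for $n=3$ the term $\int u_\varepsilon^2$ is $O(\varepsilon)$, the same order as the global mass term, so local information does not suffice.

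I expect the main obstacle to be bookkeeping the expansion carefully enough to confirm that the $V$-term dominates. The cleanest route is to work in the conformally normal gauge, which kills the scalar-curvature contribution at order $\varepsilon^2$, and then cite the general existence criterion of \cite{MR2148873}. That criterion asks only that the potential be strictly smaller than the conformal-Laplacian coefficient at some point, i.e.\ $h(p_0)<S_{\tg}(p_0)$ when $n\geqslant 4$. This is precisely $V_{\tg}(p_0)<0$. The resulting positive $u$ gives the conformally critical metric $u^{4/(n-2)}g$.
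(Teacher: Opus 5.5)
Your proposal is correct and follows the paper's route. The case $\loc=M$ is reduced directly to the Yamabe problem. For $n\geqslant 4$ with $V_g(p_0)<0$, the paper likewise rewrites the criticality equation as $\Delta_g u+hu=\lambda u^{(n+2)/(n-2)}$ with $h=\tfrac{n-2}{4(n-1)}\mathcal{S}_g$ and cites the Druet--Hebey criterion $h(p_0)<\tfrac{n-2}{4(n-1)}S_g(p_0)$, which is exactly $V_g(p_0)<0$. Your test-function sketch only unpacks that cited criterion, and it has one small slip. For $n=4,5$ the curvature and cut-off remainders are $O(\varepsilon^{n-2})$, not $O(\varepsilon^4)$. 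They are still dominated by the $V$-term, which is of order $\varepsilon^2|\log\varepsilon|$ for $n=4$ and $\varepsilon^2$ for $n=5$.
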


Drawing further on the analogy with the Yamabe equation, it is tempting to speculate that on a closed oriented projective manifold $(M,\proj)$, every conformal equivalence class $[g]$ contains a conformally critical metric for $\mathcal{E}$. Some of the cases left open by \cref{thm:maintheoremthree} appear to be within reach of the localisation theory for elliptic equations of Yamabe type developed by Druet and Hebey \cite{MR2148873}. Others -- most notably the borderline regime in which $V_g\geqslant 0$ with $\loc\neq\emptyset$ -- parallel the situation in which the classical Yamabe problem was settled by Schoen \cite{MR788292} via the positive mass theorem of Schoen and Yau \cite{MR526976,MR612249}. We hope to return to these unresolved cases in future work.

\subsection*{Acknowledgements} The author is grateful to Rod Gover, Luca Giudici and Tobias Weth for helpful conversations. 

\section{Setup}

\subsection{Projective structures}

Let $M$ be a closed oriented smooth $n$-manifold with $n\geqslant 2$. Recall that the difference of two torsion-free connections $\nabla$ and $\nabla^{\prime}$ on $TM$ is a section of the vector bundle $E=S^2(T^*M)\otimes TM$. For a section $\Phi=(\Phi^i_{jk})$ of $E$ we define $\con(\Phi) \in \Omega^1$ by the pointwise tensor contraction 
\[
[\con(\Phi)]_i=\Phi^k_{ik}.
\] 
For $\alpha=(\alpha_i) \in\Omega^1$ we obtain a section $\sym(\alpha)$ of $E$ by the rule
\[
\left[\sym(\alpha)\right]^i_{jk}=\delta^i_j\alpha_k+\delta^i_k\alpha_j. 
\]
Observe that $\con(\sym(\alpha))=(n+1)\alpha$.

For a section $\Phi$ of $E$ we denote by $\Phi_0$ its part in the kernel of the above contraction, that is,
\begin{equation}\label{eqn:glndecomp}
\Phi_0:=\Phi-\tfrac{1}{n+1}\sym\left(\con(\Phi)\right).
\end{equation}
In particular, we have
\[
\left(\sym(\alpha)\right)_0=\sym(\alpha)-\tfrac{1}{(n+1)}\sym(\con(\sym(\alpha)))=\sym(\alpha)-\sym(\alpha)=0
\]
for all $\alpha \in\Omega^1$, so that
\[
E\simeq E_0\oplus T^*M,
\]
where $E_0$ denotes the contraction-free part of $E$. 
\begin{defn}
Two torsion-free connections $\nabla$ and $\nabla^{\prime}$ on $TM$ are called \textit{projectively equivalent} if every geodesic of $\nabla$ can be reparametrised to become a geodesic of $\nabla^{\prime}$. A \textit{projective structure} on $M$ is an equivalence class $\mathfrak{p}$ of projectively equivalent torsion-free connections on $TM$. A manifold $M$ equipped with a projective structure $\proj$ will be called a \emph{projective manifold}.
\end{defn}

\begin{rmk}
By a geodesic of $\nabla$ we mean an immersed curve $\gamma : I \to M$ which satisfies $\nabla_{\dot\gamma}\dot\gamma=0$. 
\end{rmk}
We have -- see \cite{MR532831} for a modern reference:
\begin{lem}[Cartan, Eisenhart, Weyl]
Two torsion-free connections $\nabla$ and $\nabla^{\prime}$ on $TM$ are projectively equivalent if and only if $(\nabla-\nabla^{\prime})_0=0$. 
\end{lem}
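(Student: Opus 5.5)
Write $\Phi:=\nabla-\nabla^{\prime}$, a section of $E=S^2(T^*M)\otimes TM$ because both connections are torsion-free. By \eqref{eqn:glndecomp} and the splitting $E\simeq E_0\oplus T^*M$, the condition $\Phi_0=0$ says exactly that $\Phi=\sym(\alpha)$ for some $\alpha\in\Omega^1$, namely $\alpha=\tfrac{1}{n+1}\con(\Phi)$. The lemma is therefore equivalent to: $\nabla$ and $\nabla^{\prime}$ have the same unparametrised geodesics if and only if $\Phi^i_{jk}=\delta^i_j\alpha_k+\delta^i_k\alpha_j$ for some $1$-form $\alpha$. The key identity for a curve $\gamma$ is
\[
\nabla_{\dot\gamma}\dot\gamma=\nabla^{\prime}_{\dot\gamma}\dot\gamma+\Phi(\dot\gamma,\dot\gamma).
\]

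For the direction ``$\Phi_0=0\Rightarrow$ projectively equivalent'', suppose $\Phi=\sym(\alpha)$. Then $\Phi(v,v)=2\alpha(v)v$. So if $\gamma$ is a $\nabla^{\prime}$-geodesic, it satisfies $\nabla_{\dot\gamma}\dot\gamma=2\alpha(\dot\gamma)\dot\gamma$. I would then reparametrise by $s=\phi(t)$ and use the standard formula: for $\tilde\gamma=\gamma\circ\psi$ one has $\nabla_{\dot{\tilde\gamma}}\dot{\tilde\gamma}=\psi'^2\,\nabla_{\dot\gamma}\dot\gamma+\psi''\dot\gamma$. Choosing $\psi$ to solve the scalar ODE $\psi''+2\psi'^2\alpha(\dot\gamma)\circ\psi=0$ with $\psi'>0$ makes $\tilde\gamma$ a $\nabla$-geodesic. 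This ODE is solvable locally by standard existence theory, and the only bookkeeping needed is over the whole interval. By symmetry the two families of geodesics then agree.

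For the converse, assume every $\nabla$-geodesic reparametrises to a $\nabla^{\prime}$-geodesic. Fix $p\in M$ and $v\in T_pM$, and let $\gamma$ be the $\nabla^{\prime}$-geodesic with $\dot\gamma(0)=v$. Some reparametrisation of $\gamma$ is a $\nabla$-geodesic, and by the same reparametrisation formula $\nabla_{\dot\gamma}\dot\gamma$ is proportional to $\dot\gamma$. Evaluating at $t=0$ gives $\Phi(v,v)=\lambda(v)v$ for every $v$. It follows that $\Phi(v,v)\wedge v=0$ for all $v$. The function $\lambda$ is a quadratic-over-linear expression; I expect the main step to be showing it is linear.

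The algebraic step is where the real work lies. Choose a covector $\beta$ with $\beta(v)\neq0$. Then $\lambda(v)=\beta(\Phi(v,v))/\beta(v)$, so $\lambda(v)\beta(v)$ is a quadratic form $q(v)$. Since $v$ varies over an open dense set and $q$ is polynomial, $\beta$ divides $q$ as polynomials; hence $\lambda$ extends to a linear form $2\alpha$ with $\Phi(v,v)=2\alpha(v)v$ for all $v$. To make this independent of $\beta$, I would compare two choices $\beta,\beta'$: unique factorisation of polynomials forces the resulting linear factors to agree. One should also check that $\alpha$ depends smoothly on $p$, which holds since $\alpha=\tfrac{1}{n+1}\con(\Phi)$. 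Finally, polarising gives $\Phi(v,w)=\alpha(v)w+\alpha(w)v$ by the symmetry of $\Phi$, i.e.\ $\Phi=\sym(\alpha)$, so $\Phi_0=0$.
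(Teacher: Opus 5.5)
\paragraph{Verdict.} The paper gives no proof of this lemma; it only cites it. Your route is the classical argument, and its overall structure is sound. The one genuine problem is in the step you yourself identify as the real work.

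\paragraph{The divisibility step.} The claim that $\beta$ divides $q$ ``since $v$ varies over an open dense set and $q$ is polynomial'' does not follow. For any quadratic form $q$, the quotient $q/\beta$ is a perfectly good function on $\{\beta\neq 0\}$. For example, $q=x_2^2$ and $\beta=x_1$ give a smooth $q/\beta$ there, yet $\beta\nmid q$.

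What divisibility actually requires is that $q$ vanish on the hyperplane $\ker\beta$. You get this from the relation $\Phi(v,v)=\lambda(v)v$ applied also to vectors $v\in\ker\beta$: for such $v$ one has $q(v)=\beta(\Phi(v,v))=\lambda(v)\beta(v)=0$. In linear coordinates with $\beta=x_1$, this says $q(0,x_2,\dots,x_n)\equiv 0$. Hence every monomial of $q$ contains $x_1$, and $q=\beta\mu$ with $\mu$ linear.

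After that, you need neither a comparison of different $\beta$'s nor unique factorisation. The identity $\Phi(v,v)=\mu(v)v$ holds on the dense set $\{\beta\neq 0\}$. Both sides are polynomial in $v$, so it holds for all $v$. Your polarisation step with $\alpha=\mu/2$ then gives $\Phi=\sym(\alpha)$.

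\paragraph{Two smaller points.}
\begin{enumerate}
\item In the converse you assume every $\nabla$-geodesic reparametrises to a $\nabla^{\prime}$-geodesic, but you then apply this to a $\nabla^{\prime}$-geodesic. The paper's definition is one-sided, so start instead from the $\nabla$-geodesic $\gamma$ with $\dot\gamma(0)=v$. Some reparametrisation of it is a $\nabla^{\prime}$-geodesic, so $\nabla^{\prime}_{\dot\gamma}\dot\gamma\in\R\dot\gamma$. Therefore $\Phi(v,v)=-(\nabla^{\prime}_{\dot\gamma}\dot\gamma)(0)\in\R v$, and the argument proceeds as before.
\item In the forward direction, you can avoid the global ``bookkeeping'' by solving for the inverse parameter $s=\phi(t)$ rather than for $\psi$. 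It satisfies the linear equation $\phi''=2\alpha(\dot\gamma)\phi'$. Hence $\phi(t)=\int_{t_0}^{t}\exp\bigl(2\int_{t_0}^{\tau}\alpha(\dot\gamma(\sigma))\,d\sigma\bigr)\,d\tau$ is a diffeomorphism of the whole interval $I$ onto an interval, and $\gamma\circ\phi^{-1}$ is a $\nabla$-geodesic.
\end{enumerate}
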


\subsection{A choice of metric}

Besides a projective structure $\proj$, we now also fix a Riemannian metric $g$ on $M$. The metric $g=(g_{ij})$ equips each real tensor bundle $E \to M$ with a bundle metric. Abusing notation, we denote this bundle metric on all the various tensor bundles by $\langle\cdot,\cdot\rangle_g$. For instance, if $\Phi=(\Phi^i_{jk})$ and $\Psi=(\Psi^i_{jk})$ are sections of $S^2(T^*M)\otimes TM$, then
\[
\langle\Phi,\Psi\rangle_g=\Phi^i_{jk}g^{jb}g^{kc}g_{ia}\Psi^a_{bc},
\]
where we write $g^{\sharp}=(g^{ij})$ for the dual metric. Correspondingly, we obtain an inner product on the set $\Gamma(E)$ of smooth sections of $E$ by defining
\[
\llangle s_1,s_2\rrangle_g=\int_M\langle s_1,s_2\rangle_g d\mu_g,
\]
where $d\mu_g$ denotes the volume form of $g$ and $s_1,s_2\in \Gamma(E)$. As usual, we write $|s|_g:=\sqrt{\langle s,s\rangle_g}$ and likewise $\Vert s\Vert_g:=\sqrt{\llangle s,s\rrangle_g}$ for all $s \in \Gamma(E)$. For what follows, we record the identities
\begin{equation}\label{eq:basicid1}
|\Phi|^2_{\exp(2f)g}=\e^{-2f}|\Phi|^2_g
\end{equation}
and
\begin{equation}\label{eq:basicid2}
d\mu_{\exp(2f)g}=\e^{nf}d\mu_g,
\end{equation}
where $f \in C^{\infty}$ and $\Phi \in \Gamma(S^2(T^*M)\otimes TM)$.  

The bundle $E_0$ with $E=S^2(T^*M)\otimes TM$ is irreducible as a $\mathrm{GL}^+(n,\R)$-bundle, it is however not irreducible as a $\mathrm{SO}(n)$-bundle. Consider the metric trace
\[
	\tr_g : \Gamma(E_0) \to \Gamma(TM), \quad \Phi^i_{jk} \mapsto g^{jk}\Phi^i_{jk} 
\]
and the inclusion
\[
	\iota_g : \Gamma(TM) \to \Gamma(E_0),\quad X \mapsto (g\otimes X)_0
\]
which satisfy
\[
	\tr_g \circ \iota_g = \tfrac{(n+2)(n-1)}{(n+1)}\mathrm{Id}_{\Gamma(TM)}.
\]
Consequently, we can decompose $\Phi \in \Gamma(E_0)$ as
\begin{equation}\label{eqn:decomp}
	X=\frac{1}{m} \tr_g \Phi\quad \text{and} \quad A=\Phi-(g\otimes X)_0,
\end{equation}
where $m=\tfrac{(n+2)(n-1)}{(n+1)}$. By construction, $\tr_gA$ vanishes identically. Furthermore, a simple computation shows that this decomposition is orthogonal. That is, if $Z$ is a vector field on $M$ and $\Phi$ a section of $E_0$ which satisfies $\tr_g\Phi\equiv 0$, then $\langle (g\otimes Z)_0,\Phi\rangle_g$ vanishes identically as well.
Moreover, for the decomposition \eqref{eqn:decomp}, we have
\begin{equation}\label{eqn:normsplit}
	|\Phi|^2_g=|A|^2_g+m|X|^2_g,
\end{equation}
since $|(g\otimes Y)_0|^2_g=m|Y|^2_g$ for every vector field $Y$ on $M$. 

\begin{rmk}
Also observe that for $\Psi \in \Gamma(E_0)$ and a $1$-form $\alpha$ on $M$ we have
\begin{equation}\label{eqn:sondecomp}
\langle \Psi,\sym(\alpha)\rangle_g=0,
\end{equation}
as can be verified with a simple computation.
\end{rmk}

\subsection{A projectively invariant functional}

Let $\mathrm{Riem}(M)$ denote the set of Riemannian metrics on $M$. For a projective structure $\mathfrak{p}$ and orientation on $M$ we consider the following projectively invariant non-negative functional
\[
\mathcal{E} : \mathrm{Riem}(M) \to \R, \qquad g \mapsto \Vol_{M}(g)^{(2-n)/n}\int_{M} |(\nabla-\levi)_0|^2_g d\mu_g, 
\]
where $\nabla \in \mathfrak{p}$ and $\levi$ denotes the Levi-Civita connection of $g$. Note that the functional attains the value $0$ if and only if there exists a Riemannian metric $g$ whose Levi-Civita connection belongs to $\mathfrak{p}$. Observe that if $\hat{\mathcal{F}} : \mathrm{Riem}(M) \to \R$ is a functional on the set of Riemannian metrics for which there exists a constant $k$ such that $\hat{\mathcal{F}}(\e^{2c}g)=\e^{kc}\hat{\mathcal{F}}(g)$
for all constants $c$ and metrics $g$, then the functional $\mathcal{F}$ defined by $\mathcal{F}(g)=\Vol_{M}(g)^{-k/n}\hat{\mathcal{F}}(g)$ is invariant under rescaling a metric by a positive constant. Since rescaling a metric by a positive constant does not change its Levi-Civita connection, \eqref{eq:basicid1} and \eqref{eq:basicid2} imply that
\[
\mathcal{E}(cg)=\mathcal{E}(g)
\]
for all positive constants $c$ and all metrics $g \in \Riem$. 

\begin{rmk}[Notation]
The functional $\mathcal{E}$ depends on a choice of projective structure $\proj$ on $M$, as do a number of other objects we shall define. To keep the notation uncluttered, we leave this dependency implicit throughout.
\end{rmk}

\section{Variational equations}

Let $V$ be a Fr\'echet space and $U\subset V$ an open subset. The partial derivative of a function $\mathcal{F} : U \to \R$ at $g \in U$ in the direction of $h \in V$ is defined by
\[
\mathcal{F}^{\prime}_g(h):=\lim_{t\to 0}\frac{1}{t}(\mathcal{F}(g+th)-\mathcal{F}(g)),
\]
provided the limit exists. The set $\Gamma(S^2(T^*M))$ is a Fr\'echet space and the set of Riemannian metrics on $M$ is an open subset thereof, hence $\mathrm{Riem}(M)$ is a Fr\'echet manifold whose tangent space at $g$ is canonically isomorphic to $\Gamma(S^2(T^*M))$.

We say a Riemannian metric $g$ on $M$ is \textit{critical} for $\mathcal{E}$ provided $\mathcal{E}^{\prime}_g(h)=0$ for all $h \in T_g\mathrm{Riem}(M)\simeq \Gamma(S^2(T^*M))$. Likewise, we say a metric $g$ on $M$ is \textit{conformally critical} for $\mathcal{E}$, provided $\mathcal{E}^{\prime}_g(h)=0$ for all $h$ of the form $h=fg$ with $f \in C^{\infty}$. 

For a Riemannian metric $g$ on $M$, $h \in \Gamma(S^2(T^*M))$ and $t \in \R$, we write $g_t=g+th$ as well as 
\[
\Phi_{g_t}:=\left(\nabla-{}^{g_t}\nabla\right)_0 \qquad \text{and} \qquad \Phi_g:=\Phi_{g_0}.
\]
Let $\Gamma$ denote the map which assigns to a Riemannian metric $g$ its Levi-Civita connection and let $\Gamma^{\prime}_g : \Gamma(S^2(T^*M)) \to \Gamma(E)$ denote its linearisation at $g \in \mathrm{Riem}(M)$
\[
\Gamma^{\prime}_g(h)=\lim_{t\to 0}\frac{1}{t}\left({}^{g_t}\nabla-\levi\right). 
\]
Using $\Gamma^{\prime}_g$ we obtain a linear first order differential operator
\[
	\ell_g : \Gamma(S^2(T^*M)) \to \Gamma(E_0), \quad h \mapsto \left(\Gamma^{\prime}_g(h)\right)_0.
\]
We let
\[
	\ell^*_g : \Gamma(E_0) \to \Gamma(S^2(T^*M))
\]
denote the formal adjoint satisfying
\begin{equation}\label{eqn:formaladjoint}
	\int_M\langle \Psi,\ell_g(h)\rangle_g d\mu_g=\int_M\langle \ell^*_g(\Psi),h\rangle_g d\mu_g
\end{equation}
for all $h \in \Gamma(S^2(T^*M))$ and all $\Psi \in \Gamma(E_0)$. 

Furthermore, for a section $\Psi$ of $E_0$ we let $T_g(\Psi)$ denote the unique symmetric covariant $2$-tensor field so that
\[
	\pvar |\Psi|^2_{g_t}d\mu_{g_t}=\langle T_g(\Psi),h\rangle_g d\mu_g
\]
for all $h \in \Gamma(S^2(T^*M))$. In analogy with the physics literature, we call $T_g(\Psi)$ the \emph{stress-energy tensor} of $\Psi$. The integrand $|\Phi_{g_t}|^2_{g_t}d\mu_{g_t}$ depends on $t$ through two distinct quantities: the section $\Phi_{g_t}$ and the metric pairing together with the volume form $|\cdot|^2_{g_t}d\mu_{g_t}$. The chain rule therefore gives
\begin{equation}\label{eqn:varchain}
\pvar |\Phi_{g_t}|^2_{g_t}d\mu_{g_t}=\pvar |\Phi_g|^2_{g_t}d\mu_{g_t}+\pvar |\Phi_{g_t}|^2_g d\mu_g,
\end{equation}
where in the first term on the right the section is frozen at $\Phi_g$, and in the second the pairing and volume are frozen at $g$. By the definition of the stress-energy tensor, the first term equals $\langle T_g(\Phi_g),h\rangle_g d\mu_g$. By the Leibniz rule applied to the inner product, the second term equals $2\langle \Phi_g,\pvar\Phi_{g_t}\rangle_g d\mu_g$. We thus obtain
\begin{equation}\label{eqn:var}
\pvar |\Phi_{g_t}|^2_{g_t}d\mu_{g_t}=\langle T_g(\Phi_g),h\rangle_g d\mu_g+2\langle \Phi_g,\pvar\Phi_{g_t}\rangle_g d\mu_g.
\end{equation}
Writing
\[
	\Phi_{g_t}=(\nabla-\levi-({}^{g_t}\nabla-\levi))_0,
\]
we observe that the difference $\nabla-\levi$ is independent of $t$. Setting $H:=\Gamma^{\prime}_g(h)$, the definition of $\Gamma^{\prime}_g$ gives
\[
	\pvar({}^{g_t}\nabla-\levi)=H,
\]
and since $\ell_g(h)=H_0$ by definition of $\ell_g$, we conclude
\[
\pvar \Phi_{g_t}=-H_0=-\ell_g(h).
\]
Substituting into \eqref{eqn:var} thus yields
\begin{equation}\label{eqn:var2}
\pvar |\Phi_{g_t}|^2_{g_t}d\mu_{g_t}=\langle T_g(\Phi_g),h\rangle_gd\mu_g-2\langle \Phi_g,\ell_g(h)\rangle_g d\mu_g.
\end{equation}

Now consider
\[
\hat{\mathcal{E}}(g):=\int_M|(\nabla-\levi)_0|^2_gd\mu_g=\int_M|\Phi_g|^2_gd\mu_g=\Vert\Phi_g\Vert^2_g
\]
so that, interchanging $\pvar$ with $\int_M$ and integrating \eqref{eqn:var2} over $M$, the first two equalities below follow; the third uses the defining property \eqref{eqn:formaladjoint} of $\ell^*_g$ to push $\ell_g$ across the inner product:
\[
\begin{aligned}
\hat{\mathcal{E}}^{\prime}_g(h)=\pvar\int_M|\Phi_{g_t}|^2_{g_t}d\mu_{g_t}&=\int_M\langle T_g(\Phi_g),h\rangle_g-2\langle \Phi_g,\ell_g(h)\rangle_g d\mu_g\\
&=\int_M\langle T_g(\Phi_g)-2\ell^*_g(\Phi_g),h\rangle_g d\mu_g.
\end{aligned}
\]

Thinking of the volume of $M$ with respect to a metric as a functional $\Vol_{M} : \mathrm{Riem}(M) \to \R$, we have the standard identity
\[
(\Vol_{M})^{\prime}_g(h)=\frac{1}{2}\int_M(\tr_g\! h)d\mu_g=\frac{1}{2}\int_M\langle g,h\rangle_gd\mu_g.
\]
Since $\mathcal{E}(g)=\Vol_{M}(g)^{(2-n)/n}\hat{\mathcal{E}}(g)$, the product rule gives
\[
\mathcal{E}^{\prime}_g(h)=\Vol_{M}(g)^{(2-n)/n}\hat{\mathcal{E}}^{\prime}_g(h)+\tfrac{2-n}{n}\Vol_{M}(g)^{(2-n)/n-1}\hat{\mathcal{E}}(g)\,(\Vol_M)^{\prime}_g(h).
\]
Substituting the formula for $(\Vol_M)^{\prime}_g(h)$ above and using $\hat{\mathcal{E}}(g)=\Vert\Phi_g\Vert^2_g$, we obtain
\[
\mathcal{E}^{\prime}_g(h)=\Vol_{M}(g)^{(2-n)/n}\left[\hat{\mathcal{E}}^{\prime}_g(h)-\left(\frac{n-2}{2n}\right)\frac{\Vert\Phi_g\Vert^2_g}{\Vol_{M}(g)}\int_M \langle g,h\rangle_g d\mu_g\right].
\]
Combining this with the expression for $\hat{\mathcal{E}}^{\prime}_g(h)$ derived above, and setting $k:=\left(\frac{n-2}{2n}\right)\frac{\Vert\Phi_g\Vert^2_g}{\Vol_{M}(g)}$, we conclude
\begin{equation}\label{eqn:varE}
\mathcal{E}^{\prime}_g(h)=\Vol_{M}(g)^{(2-n)/n}\int_M\langle T_g(\Phi_g)-2\ell^*_g(\Phi_g)-kg,\,h\rangle_g d\mu_g.
\end{equation}
By the fundamental lemma of the calculus of variations, $\mathcal{E}^{\prime}_g(h)=0$ for all $h \in \Gamma(S^2(T^*M))$ if and only if the symmetric tensor $T_g(\Phi_g)-2\ell^*_g(\Phi_g)-kg$ vanishes identically. We thus have:
\begin{prop}\label{prop:vareq}
A Riemannian metric $g$ is critical for $\mathcal{E}$ if and only if
\[
T_g(\Phi_g)-2\ell^*_g(\Phi_g)=kg,
\]
where the constant $k$ is given by $k=\left(\frac{n-2}{2n}\right)\frac{\Vert\Phi_g\Vert^2_g}{\Vol_{M}(g)}$.
\end{prop}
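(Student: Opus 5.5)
The statement is essentially a repackaging of the first-variation formula \eqref{eqn:varE} derived just above, so the plan is to make the remaining logical steps explicit rather than to compute anything new.

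\emph{First variation.} I will start from
\[
\mathcal{E}^{\prime}_g(h)=\Vol_{M}(g)^{(2-n)/n}\int_M\langle T_g(\Phi_g)-2\ell^*_g(\Phi_g)-kg,\,h\rangle_g\, d\mu_g,
\]
which is valid for every $h\in\Gamma(S^2(T^*M))$. It relies on the following ingredients:
\begin{itemize}
\item the chain rule split \eqref{eqn:varchain};
\item the identity $\pvar\Phi_{g_t}=-\ell_g(h)$;
\item the defining property \eqref{eqn:formaladjoint} of the formal adjoint;
\item the standard variation of the volume.
\end{itemize}
Two points need a sentence of justification. First, the $t$-derivative may be interchanged with $\int_M$, because $M$ is closed and the integrand depends smoothly on $(t,p)$. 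Second, $\ell^*_g(\Phi_g)$ may be applied, because $\Phi_g$ is indeed a section of $E_0$ by construction.

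\emph{Sufficiency.} If $T_g(\Phi_g)-2\ell^*_g(\Phi_g)=kg$, the integrand in the formula above vanishes pointwise. Hence $\mathcal{E}^{\prime}_g(h)=0$ for all $h$, and $g$ is critical.

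\emph{Necessity.} Suppose $\mathcal{E}^{\prime}_g(h)=0$ for all $h$. The prefactor $\Vol_M(g)^{(2-n)/n}$ is positive, so dividing it out gives
\[
\int_M\langle S,h\rangle_g\, d\mu_g=0 \quad\text{for all } h, \qquad\text{where } S:=T_g(\Phi_g)-2\ell^*_g(\Phi_g)-kg.
\]
The tensor $S$ is a smooth symmetric $2$-tensor. Choosing $h=S$ yields $\Vert S\Vert_g^2=0$, and smoothness then forces $S\equiv0$. This is the fundamental lemma in its simplest form, and no approximation argument is needed.

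There is no real obstacle here. The only care needed is in confirming that $T_g(\Phi_g)$ is well defined as a symmetric tensor, which follows because the variation of $|\Psi|^2_{g_t}d\mu_{g_t}$ is linear and algebraic in $h$. I would also double-check the value of $k$, including its sign, by redoing the product-rule computation for the factor $\Vol_M(g)^{(2-n)/n}$.
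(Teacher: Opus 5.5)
Your proposal is correct and follows the paper's route: the paper derives \eqref{eqn:varE} from the chain-rule split \eqref{eqn:varchain}, the identity $\pvar\Phi_{g_t}=-\ell_g(h)$, the adjoint property \eqref{eqn:formaladjoint} and the variation of the volume, and then concludes by the fundamental lemma of the calculus of variations. Your test choice $h=S$, which gives $\Vert S\Vert_g^2=0$, simply spells out that last step, and your value of $k$, including its sign, matches the paper's product-rule computation.
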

We also need:
\begin{lem}\label{lem:stressenergy}
For $\Psi\in \Gamma(E_0)$, the stress-energy tensor $T_g(\Psi) \in \Gamma(S^2(T^*M))$ is given by
\[
	T_g(\Psi)=\frac{1}{2}|\Psi|^2_gg+\Psi\mal \Psi,
\]
where
\[
(\Psi\mal\Psi)_{ij}=\Psi^k_{uv}\Psi^a_{bc}g_{ia}g_{jk}g^{ub}g^{vc}-2\Psi^k_{jc}\Psi^a_{ib}g_{ak}g^{bc}.
\]
In particular
\begin{equation}\label{eqn:tracestressenergy}
	\tr_g\left(T_g(\Psi)\right)=\left(\tfrac{n}{2}-1\right)|\Psi|^2_g.
\end{equation}
\end{lem}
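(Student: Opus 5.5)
The plan is to differentiate the pointwise expression directly, holding $\Psi$ fixed as a section of $S^2(T^*M)\otimes TM$. Only the metric enters the integrand $|\Psi|^2_{g_t}d\mu_{g_t}$, and it does so in three ways: through the two copies of the inverse metric $g^{jb}g^{kc}$, through the single lowering $g_{ia}$, and through the volume form. Along $g_t=g+th$ I would use the standard first variations
\[
\pvar g_t^{jb}=-h^{jb}=-g^{jp}g^{bq}h_{pq},\qquad \pvar (g_t)_{ia}=h_{ia},\qquad \pvar d\mu_{g_t}=\tfrac12(\tr_g h)\,d\mu_g=\tfrac12\langle g,h\rangle_g d\mu_g.
\]
By the Leibniz rule, the volume term contributes $\tfrac12|\Psi|^2_g\langle g,h\rangle_g d\mu_g$. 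This accounts for the summand $\tfrac12|\Psi|^2_g g$ of $T_g(\Psi)$.

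Next I would handle the metric pairing. Varying $g_{ia}$ gives $\Psi^i_{uv}\Psi^a_{bc}g^{ub}g^{vc}h_{ia}$. To write this as $\langle S,h\rangle_g$ one raises both indices of $S$ with $g$, so the symmetric tensor produced is $S_{ij}=\Psi^k_{uv}\Psi^a_{bc}g_{ia}g_{jk}g^{ub}g^{vc}$, which is the first term of $\Psi\mal\Psi$. The two inverse-metric factors appear symmetrically, because $\Psi$ is symmetric in its lower indices. Together they contribute
\[
-2\,\Psi^i_{jk}\Psi^a_{bc}\,h^{jb}g^{kc}g_{ia}.
\]
Lowering the indices of $h$ identifies this with $\langle S',h\rangle_g$, where $S'_{ij}=-2\Psi^k_{jc}\Psi^a_{ib}g_{ak}g^{bc}$ after symmetrisation in $i,j$. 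This is the second term of $\Psi\mal\Psi$. I expect the only real obstacle to be this bookkeeping: matching dummy indices and checking that the resulting tensors are symmetric, or replacing them by their symmetric parts, which does not change the pairing with a symmetric $h$. There is no conceptual difficulty.

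For the trace formula \eqref{eqn:tracestressenergy}, I would contract with $g^{ij}$ term by term. The first summand gives $\tfrac n2|\Psi|^2_g$. The first part of $\Psi\mal\Psi$ contracts to exactly $|\Psi|^2_g$. The second part contracts to $-2\Psi^k_{ic}\Psi^a_{jb}g_{ak}g^{ij}g^{bc}=-2|\Psi|^2_g$, using again the symmetry of the lower indices. The total is $(\tfrac n2-1)|\Psi|^2_g$.

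As an independent check, I would take $h=2g$, the infinitesimal version of $g\mapsto \e^{2c}g$. By \eqref{eq:basicid1} and \eqref{eq:basicid2}, $|\Psi|^2_{\e^{2c}g}d\mu_{\e^{2c}g}=\e^{(n-2)c}|\Psi|^2_gd\mu_g$. Differentiating at $c=0$ gives $(n-2)|\Psi|^2_g=\langle T_g(\Psi),2g\rangle_g=2\tr_g T_g(\Psi)$, which confirms the trace identity.
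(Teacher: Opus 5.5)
Your proposal is correct and takes essentially the same route as the paper: a direct first-order expansion of $|\Psi|^2_{g_t}d\mu_{g_t}$ using the variations of $g_{ia}$, $g^{ij}$ and $d\mu_g$, with the two inverse-metric contributions merged via the symmetry of $\Psi$ in its lower indices. Your explicit trace contraction, and the independent scaling check with $h=2g$, supply the verification of \eqref{eqn:tracestressenergy} that the paper leaves as an omitted elementary computation.
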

\begin{proof}
Recall the standard identities
\[
\pvar d\mu_{g_t}=\frac{1}{2}(\tr_g\!h) d\mu_g\qquad \text{and}\qquad \pvar\left(g_t^{\sharp}\right)^{ij}=-h^{ij}
\]
where we raise indices of $h$ with the metric $g$. Then
\begin{multline*}
|\Psi|^2_{g_t}d \mu_{g_t}=\Psi^k_{ij}\Psi^a_{bc}\left(g_{ak}+th_{ak}\right)
\left(g^{ib}-th^{ib}+\mathcal{O}(t^2)\right)\\\left(g^{jc}-th^{jc}+\mathcal{O}(t^2)\right)\left(1+\frac{1}{2}tg_{pq}h^{pq}+\mathcal{O}(t^2)\right)d\mu_{g}.
\end{multline*}
From this we calculate
\begin{multline*}
\pvar |\Psi|^2_{g_t}d\mu_{g_t}=\Big[\Psi^k_{uv}\Psi^a_{bc}g_{ia}g_{jk}g^{ub}g^{vc}-2\Psi^k_{jc}\Psi^a_{ib}g_{ak}g^{bc}\Big.\\\Big.+\tfrac{1}{2}|\Psi|^2_gg_{ij}\Big]h^{ij}d\mu_g,
\end{multline*}
so that
\[
	\pvar |\Psi|^2_{g_t}d\mu_{g_t}=\langle T_g(\Psi),h\rangle_g d\mu_g,
\]
as claimed. The identity \eqref{eqn:tracestressenergy} follows from an elementary computation which we omit. 
\end{proof}
\begin{lem}
For $g \in \mathrm{Riem}(M)$ the operator
\[
	\ell^*_g : \Gamma(E_0) \to \Gamma(S^2(T^*M))
\]
is given by
\[
\Psi^i_{jk} \mapsto \tfrac{1}{2}\left(\levi_k\Psi^k_{ij}-g^{uv}g_{ik}\levi_u\Psi^k_{jv}-g^{uv}g_{jk}\levi_u\Psi^k_{iv}\right).
\]	
\end{lem}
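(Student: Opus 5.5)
The plan is to compute $\ell_g$ explicitly from the classical first variation of the Levi-Civita connection, and then integrate by parts against $\Psi$, using $\Psi\in\Gamma(E_0)$ to dispose of any pure-trace contributions.

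\textbf{Step 1: the variation of the connection.} For $g_t=g+th$ the standard Koszul-formula computation gives
\[
H^i_{jk}=\left[\Gamma^{\prime}_g(h)\right]^i_{jk}=\tfrac{1}{2}g^{il}\left(\levi_j h_{kl}+\levi_k h_{jl}-\levi_l h_{jk}\right).
\]
Since $\ell_g(h)=H_0$, we have
\[
\llangle\Psi,\ell_g(h)\rrangle_g=\llangle \Psi,H\rrangle_g-\tfrac{1}{n+1}\llangle\Psi,\sym(\con(H))\rrangle_g .
\]
By \eqref{eqn:sondecomp}, $\langle\Psi,\sym(\alpha)\rangle_g=0$ for every $\Psi\in\Gamma(E_0)$ and every $1$-form $\alpha$. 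The second term therefore vanishes pointwise, so $\llangle\Psi,\ell_g(h)\rrangle_g=\llangle\Psi,H\rrangle_g$ and we may work with $H$ itself.

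\textbf{Step 2: integration by parts.} Lower the upper index of $\Psi$ by setting $\Psi_{ljk}:=g_{li}\Psi^i_{jk}$, which is symmetric in $j,k$. Then
\[
\langle\Psi,H\rangle_g=\tfrac{1}{2}\Psi^{ljk}\left(\levi_j h_{kl}+\levi_k h_{jl}-\levi_l h_{jk}\right)=\Psi^{ljk}\levi_j h_{kl}-\tfrac{1}{2}\Psi^{ljk}\levi_l h_{jk},
\]
where the two middle terms were combined using the symmetry of $\Psi^{ljk}$ in $j,k$. Since $M$ is closed, the divergence theorem moves each derivative onto $\Psi$:
\[
\llangle\Psi,H\rrangle_g=\int_M\left(-\levi_j\Psi^{ljk}\,h_{kl}+\tfrac{1}{2}\levi_l\Psi^{ljk}\,h_{jk}\right)d\mu_g .
\]
The resulting coefficient of $h$ is not symmetric, so we symmetrise it in the two free indices, as pairing with a symmetric $h$ requires. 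This yields
\[
\ell^*_g(\Psi)_{ij}=\tfrac{1}{2}\levi_l\Psi^l{}_{ij}-\tfrac{1}{2}\left(\levi_u\Psi_i{}^{u}{}_{j}+\levi_u\Psi_j{}^{u}{}_{i}\right).
\]

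\textbf{Step 3: matching notation.} It remains to rewrite this in the index form of the statement. The first term is $\tfrac{1}{2}\levi_k\Psi^k_{ij}$. For the second, $\levi_u\Psi_i{}^u{}_j=g^{uv}g_{ik}\levi_u\Psi^k_{vj}$, and $\Psi^k_{vj}=\Psi^k_{jv}$ by symmetry of the lower indices. The third term is handled the same way.

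\textbf{Main obstacle.} The only delicate point is careful index bookkeeping: applying the symmetry of $\Psi$ in its lower indices correctly, and checking signs and factors of $\tfrac12$ when symmetrising. Conceptually the key step is Step 1's observation that the projection to $E_0$ may be dropped, which follows from \eqref{eqn:sondecomp}.
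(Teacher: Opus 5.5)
Your proposal is correct and follows essentially the same route as the paper: write down $H=\Gamma^{\prime}_g(h)$ explicitly, discard the $\sym(\con(H))$ part via \eqref{eqn:sondecomp}, merge the two middle terms using the symmetry of $\Psi$ in its lower indices, integrate by parts, and symmetrise in the free indices. Your index bookkeeping in Steps~2 and~3 checks out and reproduces the stated formula exactly.
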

\begin{proof}
We need to show that for $g \in \Riem$, $h \in \Gamma(S^2(T^*M))$ and $\Psi \in \Gamma(E_0)$, we have
\[
	\int_M\langle \Psi,\ell_g(h)\rangle_g d\mu_g=\int_M\langle \ell^*_g(\Psi),h\rangle_g d\mu_g.
\]
With our notation $H=\Gamma^{\prime}_g(h)$, an elementary computation gives
\[
H^k_{ij}=\frac{1}{2}g^{kl}\left(\levi_ih_{jl}+\levi_j h_{il}-\levi_lh_{ij}\right).
\]
Recall from \eqref{eqn:glndecomp} that
\[
	H=H_0+\tfrac{1}{(n+1)}\sym(\con(H)).
\]
Since $\Psi \in \Gamma(E_0)$ and $\ell_g(h)=H_0$, we have 
\[
\langle \Psi,H\rangle_g=\langle \Psi,H_0\rangle_g+\tfrac{1}{(n+1)}\langle \Psi,\sym(\con(H))\rangle_g=\langle \Psi,H_0\rangle_g=\langle \Psi,\ell_g(h)\rangle_g,
\]
where we use \eqref{eqn:sondecomp}. 
Contracting $g_{ka}g^{al}=\delta^l_k$ gives
\[
\langle \Psi,H\rangle_g=\frac{1}{2}\Psi^k_{ij}g^{ib}g^{jc}\left(\levi_b h_{ck}+\levi_c h_{bk}-\levi_k h_{bc}\right).
\]
The first two terms contribute equally (swap $i\leftrightarrow j$, $b\leftrightarrow c$ and use $\Psi^k_{ij}=\Psi^k_{ji}$), so that
\[
\langle \Psi,H\rangle_g=\Psi^k_{ij}g^{ib}g^{jc}\levi_b h_{ck}-\frac{1}{2}\Psi^k_{ij}g^{ib}g^{jc}\levi_k h_{bc}.
\]
Consider the vector field $Y=(Y^{l})$ on $M$ with 
\[
	Y^{l}=g^{il}g^{jc}\Psi^k_{ij}h_{ck}.
\]
Since $\int_M \div_g\!Y\,d\mu_g=0$ by Stokes' theorem, we obtain
\[
	\int_M g^{ib}g^{jc}\nabla_b(\Psi^k_{ij}h_{ck})d \mu_g=0 
\]
and hence
\[
	\int_Mg^{ib}g^{jc}\Psi^k_{ij}\levi_b h_{ck}\,d\mu_g=-\int_M g^{ib}g^{jc}(\levi_b\Psi^k_{ij})\,h_{ck}\,d\mu_g.
\]
Like-wise we obtain
\[
-\frac{1}{2}\int_M\Psi^k_{ij}g^{ib}g^{jc}\levi_k h_{bc}\,d\mu_g=\frac{1}{2}\int_M g^{ib}g^{jc}(\levi_k\Psi^k_{ij})\,h_{bc}\,d\mu_g.
\]
In the second integral, substituting $h_{bc}=g_{bp}g_{cq}h^{pq}$ gives $\frac{1}{2}(\levi_k\Psi^k_{pq})\,h^{pq}$. In the first, substituting $h_{ck}=g_{cp}g_{kq}h^{pq}$ gives $-g^{ub}g_{kq}(\levi_b\Psi^k_{up})\,h^{pq}$. Symmetrising in $p,q$ and relabelling dummies (using $\Psi^k_{up}=\Psi^k_{pu}$) yields
\[
[\ell^*_g(\Psi)]_{pq}=\frac{1}{2}\left(\levi_k\Psi^k_{pq}-g^{uv}g_{pk}\levi_u\Psi^k_{qv}-g^{uv}g_{qk}\levi_u\Psi^k_{pv}\right). \qedhere
\]
\end{proof}

\subsection{Decomposing the variational equations} 

Decomposing $\Phi_g$ as in \eqref{eqn:decomp}
\[
	X_g:=\frac{1}{m}\tr_g \Phi_g\qquad \text{and} \qquad A_{g}:=\Phi_g-(g\otimes X_g)_0
\]
it turns out that $A_{g}$ depends only on the conformal equivalence class $[g]$ of $g$. To this end let $f \in C^{\infty}$ and consider $\tg=\exp(2f)g$. Using the identity (see \cite[Thm.~1.159]{MR867684})
\begin{equation}\label{eqn:besse}
{}^{\tg}\nabla=\levi-g\otimes\grad_g\! f+\sym(\d f),
\end{equation}
we obtain
\[
\Phi_{\tg}=(\nabla-{}^{\tg}\nabla)_0=\Phi_g+(g\otimes \grad_g f)_0=\Phi_g+g\otimes \grad_g f-\frac{1}{(n+1)}\sym(\d f).
\]
where we use that $\con(g\otimes \grad_g f)=\d f$.
Hence we compute
\begin{equation}\label{eq:confX}
\begin{aligned}
X_{\tg}&=\frac{1}{m}\tr_{\tg}\!\Phi_{\tg}=\e^{-2f}\left(X_g+\frac{n}{m}\grad_g f-\frac{2}{(n+1)m}\grad_g f\right)\\
&=\e^{-2f}\left(X_g+\grad_g f\right).
\end{aligned}
\end{equation}
This also gives
\[
\begin{aligned}
	A_{\tg}&=\Phi_{\tg}-(\tg\otimes X_{\tg})_0\\
	&=\Phi_g+g\otimes \grad_g f-\tfrac{1}{(n+1)}\sym(\d f)-\left(g\otimes(X_g+\grad_g f)\right)_0\\
	&=A_g+g\otimes \grad_g f-\tfrac{1}{(n+1)}\sym(\d f)-(g\otimes \grad_g f)_0\\
	&=A_g-\tfrac{1}{(n+1)}\sym(\d f)+\tfrac{1}{(n+1)}\sym(\con(g\otimes \grad_g f))=A_{g}. 
\end{aligned}
\]
We will henceforth write $\higgs$ instead of $A_g$. The conformal invariance of $\higgs$ was previously observed in \cite[Thm.~2.4]{MR4201191}. We included the argument for the convenience of the reader. 

For what follows we recall the standard identity for a vector field $Y$ on $M$
\[
\div_{\tg}Y=\div_g\! Y+nY(f),
\]
which, together with \eqref{eq:confX}, gives
\begin{equation}\label{eq:confchangediv}
\div_{\tg}X_{\tg}=\e^{-2f}\left(\div_g\! X_g+(n-2)X_g(f)-\Delta_g f+(n-2)|\d f|^2_g\right),
\end{equation}
where $\Delta_g=-\div_g \grad_g$.

\subsection{Conformal variations}

As an immediate corollary of \cref{prop:vareq} we have:
\begin{cor}\label{cor:confvar}
A metric $g$ is conformally critical for $\mathcal{E}$ if and only if
\begin{equation}\label{eq:confcrit}
\div_g(\tr_g\!\Phi_g)=\left(\frac{n}{2}-1\right)\left(|\Phi_g|^2_g-\frac{\Vert\Phi_g\Vert_g^2}{\Vol_{M}(g)}\right).
\end{equation}
\end{cor}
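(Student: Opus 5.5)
Since conformal criticality only involves variations $h=fg$, the plan is to take $h=fg$ in \eqref{eqn:varE} and use that $f$ is arbitrary. Writing $\langle S,fg\rangle_g=f\,\tr_g S$, the fundamental lemma shows that $g$ is conformally critical if and only if
\[
\tr_g\big(T_g(\Phi_g)\big)-2\tr_g\big(\ell^*_g(\Phi_g)\big)-nk=0 .
\]
The argument then reduces to computing the three traces separately.

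\emph{First term.} By \eqref{eqn:tracestressenergy} in \cref{lem:stressenergy}, $\tr_g T_g(\Phi_g)=(\tfrac n2-1)|\Phi_g|^2_g$.

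\emph{Constant term.} Directly from the definition of $k$,
\[
nk=\left(\tfrac{n}{2}-1\right)\frac{\Vert\Phi_g\Vert^2_g}{\Vol_M(g)} .
\]

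\emph{Adjoint term.} This is the main computation. I would trace the explicit formula for $\ell^*_g$ with $g^{pq}$, using $\levi g=0$ to commute the contractions past the derivative. The first summand gives $\tfrac12\levi_k(g^{pq}\Psi^k_{pq})=\tfrac12\div_g(\tr_g\Psi)$. The second and third summands each give $g^{uv}\levi_u\Psi^k_{kv}$, which is the divergence of $\con(\Psi)$ with its index raised. This vanishes because $\Psi\in\Gamma(E_0)$. Hence
\[
\tr_g\ell^*_g(\Psi)=\tfrac12\div_g(\tr_g\Psi).
\]
The only delicate point is checking the index placement, so that the second and third summands really produce the contraction $\Psi^k_{kv}$ and not some other trace.

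Substituting the three traces, the criticality condition becomes
\[
\left(\tfrac n2-1\right)|\Phi_g|^2_g-\div_g(\tr_g\Phi_g)-\left(\tfrac n2-1\right)\frac{\Vert\Phi_g\Vert^2_g}{\Vol_M(g)}=0,
\]
which is exactly \eqref{eq:confcrit}.
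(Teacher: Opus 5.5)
Your proposal is correct and follows the paper's proof essentially step for step. You substitute $h=fg$ into \eqref{eqn:varE}, then use \eqref{eqn:tracestressenergy}, $nk=(\tfrac n2-1)\Vert\Phi_g\Vert^2_g/\Vol_M(g)$ and $\tr_g\ell^*_g(\Phi_g)=\tfrac12\div_g(\tr_g\Phi_g)$; your index check that the last two summands of $\ell^*_g$ trace to $g^{uv}\levi_u\Psi^k_{kv}$, which vanishes because $\con(\Psi)=0$ on $E_0$, is right and supplies the justification that the paper states only as an observation.
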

\begin{proof}
Observe that we have
\[
\tr_g\left(\ell^*_g(\Phi_g)\right)=\tfrac{1}{2}\div_g(\tr_g\Phi_g),
\]
where
\[
	\div_g(\tr_g\Phi_g)=\levi_ig^{jk}(\Phi_g)^i_{jk}.
\]
Furthermore, recall \eqref{eqn:tracestressenergy}
\[
	\tr_g(T_g(\Phi_g))=\left(\tfrac{n}{2}-1\right)|\Phi_g|^2_g.
\]
For $h=f g$ with $f \in C^{\infty}$, we obtain
\[
\begin{aligned}
\mathcal{E}^{\prime}_g(fg)&=\Vol_{M}(g)^{(2-n)/n}\int_M f \tr_g\left(T_g(\Phi_g)-2\ell^*_g(\Phi_g)\right)-kn f d\mu_g\\
&=\Vol_{M}(g)^{(2-n)/n}\int_M f\left[\left(\frac{n}{2}-1\right)|\Phi_g|^2_g-\div_g(\tr_g\Phi_g)-kn\right]d\mu_g,
\end{aligned}
\]
and the claim follows.
\end{proof}

\section{The surface case}

\subsection{Example: Projective structures from holomorphic cubic differentials}
Let $(M,g)$ be an oriented Riemannian $2$-manifold and let $J : TM \to TM$ denote counter clockwise rotation by $\pi/2$ with respect to the orientation and metric $g$. We consider a cubic differential $C$ on the Riemann surface $(M,J)$. Its real part is a totally symmetric $(0,3)$ tensor field $A=(A_{ijk})$ on $M$ that is totally trace-free with respect to $g$, that is, $g^{ij}A_{ijk}$ vanishes identically. Define $\alpha=(\alpha^i_{jk}) \in \Gamma(E)$ by $\alpha^i_{jk}=g^{il}A_{ljk}$ and consider the projective structure $\mathfrak{p}$ arising from $\levi+\alpha$. By construction, $\alpha \in \Gamma(E_0)$ and $\Phi_g=\alpha$. Applying \cref{prop:vareq}, the variational equations become $T_g(\alpha)=2\ell^*_g(\alpha)$. By \eqref{eqn:tracestressenergy}, the tensor field $T_g(\alpha)$ is totally trace-free with respect to $g$ and hence encodes a quadratic differential on $(M,J)$. The map $T_g$ thus gives rise to a quadratic form on $\Gamma(K^3_{M,J})$ with values in $\Gamma(K^2_{M,J})$, where $K_{M,J}$ denotes the canonical bundle of $(M,J)$. Unsurprisingly, this quadratic form is trivial (as can be checked with a direct computation), hence $T_g(\alpha)$ vanishes identically and the variational equations simplify to $\ell^*_g(\alpha)=0$. A computation gives $2\ell^*_g(\alpha)=-\div_g \alpha$ with $\left(\div_g \alpha\right)_{ij}=\levi_k \alpha^k_{ij}$. This last condition is well known to be equivalent to the holomorphicity of $C$. From this we obtain:

\maintheoremone*

\begin{proof}
By \cite{MR1828223,MR2402597}, on a closed oriented surface $M$ of negative Euler characteristic every \emph{properly convex projective structure} $\mathfrak{p}$ arises from a unique pair $(g,C)$ with $C$ holomorphic via the above construction. That is, the torsion-free connection $\levi+\alpha$ on $TM$ is a representative connection of $\proj$. Additionally, the pair satisfies $K_g=-1+2|C|^2_g$, where $K_g$ denotes the Gauss curvature of $g$. The metric $g$ is known as the \emph{Blaschke metric} of $\mathfrak{p}$. The  above computations then show that $T_g(\alpha)=0$ and $\ell^*_g(\alpha)=0$ by the holomorphicity of $C$, so the Blaschke metric $g$ is critical for $\mathcal{E}$.
\end{proof}

\subsection{Existence and uniqueness of conformally critical metrics}

\cref{cor:confvar} implies that for $n=2$ a Riemannian metric $g$ is conformally critical if and only if $\div_g\!X_g=0$. Using \eqref{eq:confchangediv}, looking for a conformally critical metric $\tg=\e^{2f}g$ in the conformal equivalence class of $g$ thus gives the PDE
\begin{align*}
\div_{\tg}X_{\tg}=\e^{-2f}\left(\div_g\! X_g-\Delta_g f\right)=0.
\end{align*}
Hence we obtain:
\maintheoremtwo*

\begin{proof}
Recall that for $u \in C^{\infty}$ the Poisson equation $\Delta_gf=u$ on a closed oriented Riemannian manifold $(M,g)$ has a smooth solution, unique up to adding a constant, provided $\int_Mud\mu_g=0$. In our case $u=\div_g\! X_g$, hence the claim is a consequence of Stokes' theorem.  
\end{proof}

\section{The projective Yamabe equation} 

\subsection{The projective-conformal Laplacian}\label{sub:the_projective_conformal_laplacian}

From now on we assume $n\geqslant 3$. Consider a closed oriented conformal $n$-manifold $(M,[g])$. Recall that the Yamabe problem asks whether there exists a metric $\tilde{g} \in [g]$ whose scalar curvature $S_{\tg}$ is constant. For $f \in C^{\infty}$ the scalar curvature of the metric $\tilde{g}=\exp(2f)g$ is
\[
S_{\tilde{g}}=\e^{-2f}\left(S_g+2(n-1)\Delta_g f-(n-1)(n-2)|\d f|^2_g\right),
\]
where $\Delta_g f=-\div_g\grad_g f$. Writing $\tg=u^{4/(n-2)}g$ for a positive function $u$, the previous equation becomes
\[
4\left(\frac{n-1}{n-2}\right)\Delta_gu+S_gu=u^{(n+2)/(n-2)}S_{\tg}.
\]
Finding a metric $\tilde{g} \in [g]$ with constant scalar curvature $C$ thus amounts to solving $L_g u=C u^{(n+2)/(n-2)}$, where $L_g=4\left(\tfrac{n-1}{n-2}\right)\Delta_g+S_g$ denotes the so-called \emph{conformal Laplacian}. 

We now return to the problem of finding a conformally critical metric on a closed oriented projective manifold of dimension $n\geqslant 3$. Writing 
\begin{equation}\label{eq:defformscalcurv}
\mathcal{S}_g:=\frac{2(n+1)}{(n+2)}\left[\left(\frac{n}{2}-1\right)|\Phi_g|^2_g-\div_g(\tr_g\Phi_g)\right]
\end{equation}
the equation \eqref{eq:confcrit} is equivalent to the statement that a metric $g$ is conformally critical for $\mathcal{E}$ if and only if $\mathcal{S}_g$ is constant.  
\begin{lem}\label{lem:projectivescalar}
Let $f \in C^{\infty}$. Under a conformal change $g \mapsto \tg=\e^{2f}g$, the function $\mathcal{S}_g$ transforms as the scalar curvature, that is,
\[
\mathcal{S}_{\tg}=\e^{-2f}(\mathcal{S}_g+2(n-1)\Delta_g f-(n-1)(n-2)|\d f|^2_g),
\]
where $\Delta_g f=-\div_g\!\grad_g\! f$. 
\end{lem}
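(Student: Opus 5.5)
The plan is to compute $\mathcal{S}_{\tg}$ directly from its definition \eqref{eq:defformscalcurv}, after rewriting it through the orthogonal decomposition \eqref{eqn:decomp}. There everything transforms in a controlled way: $\higgs$ is conformally invariant, and the transformation laws of $X_g$ and $\div_g X_g$ are already recorded in \eqref{eq:confX} and \eqref{eq:confchangediv}. The only real task is to check that the cross terms cancel and that the constants come out right.

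First, I use $\tr_g\Phi_g=mX_g$ and the norm splitting \eqref{eqn:normsplit}, $|\Phi_g|^2_g=|\higgs|^2_g+m|X_g|^2_g$, to rewrite
\[
\mathcal{S}_g=\tfrac{2(n+1)}{(n+2)}\Big[\big(\tfrac{n}{2}-1\big)|\higgs|^2_g+m\Big(\big(\tfrac{n}{2}-1\big)|X_g|^2_g-\div_g X_g\Big)\Big].
\]
Next, I transform each term under $\tg=\e^{2f}g$. Since $\higgs$ is the same section for $g$ and $\tg$, \eqref{eq:basicid1} gives $|\higgs|^2_{\tg}=\e^{-2f}|\higgs|^2_g$. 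From \eqref{eq:confX}, using $|Y|^2_{\tg}=\e^{2f}|Y|^2_g$ for vector fields,
\[
|X_{\tg}|^2_{\tg}=\e^{-2f}\big(|X_g|^2_g+2X_g(f)+|\d f|^2_g\big).
\]
For the divergence term I use \eqref{eq:confchangediv} directly.

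Substituting these, the bracket multiplying $m$ becomes $\e^{-2f}$ times
\[
\big(\tfrac{n}{2}-1\big)\big(|X_g|^2_g+2X_g(f)+|\d f|^2_g\big)-\div_g X_g-(n-2)X_g(f)+\Delta_g f-(n-2)|\d f|^2_g .
\]
The $X_g(f)$ terms cancel, because $2(\tfrac n2-1)=n-2$. What remains is the old bracket plus $\Delta_g f-\tfrac{n-2}{2}|\d f|^2_g$. Multiplying by the prefactor and using
\[
m\cdot\tfrac{2(n+1)}{(n+2)}=\tfrac{(n+2)(n-1)}{(n+1)}\cdot\tfrac{2(n+1)}{(n+2)}=2(n-1),
\]
the extra terms become exactly $2(n-1)\Delta_g f-(n-1)(n-2)|\d f|^2_g$, all multiplied by $\e^{-2f}$. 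This is the claimed transformation law.

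There is no genuine obstacle. The only delicate point is keeping the conformal weights straight: the norm of a vector field scales by $\e^{2f}$, while $X_{\tg}$ carries a factor $\e^{-2f}$. I also need the sign convention $\Delta_g=-\div_g\grad_g$ to be consistent with the one used in \eqref{eq:confchangediv}. The cancellation of the mixed term $X_g(f)$ is the key check, and it holds precisely because of the coefficient $\tfrac n2-1$ in \eqref{eq:defformscalcurv}.
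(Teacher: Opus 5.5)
Your proposal is correct and follows essentially the paper's argument. Both compute directly from \eqref{eq:defformscalcurv} using \eqref{eq:confX}, \eqref{eq:confchangediv} and $m\cdot\tfrac{2(n+1)}{n+2}=2(n-1)$, with the $X_g(f)$ cross terms cancelling. The only cosmetic difference is how you get $|\Phi_{\tg}|^2_{\tg}=\e^{-2f}(|\Phi_g|^2_g+2mX_g(f)+m|\d f|^2_g)$: you use the splitting \eqref{eqn:normsplit} and the conformal invariance of $\higgs$, whereas the paper expands $|\Phi_g+(g\otimes\grad_g f)_0|^2_g$ directly.
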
 
\begin{proof} 
From the formula 
\[
	\Phi_{\tg}=\Phi_g+g\otimes \grad_g f-\frac{1}{(n+1)}\sym(\d f)
\]
we compute
\begin{equation}\label{eq:confchangehiggs}
\begin{aligned}
|\Phi_{\tg}|^2_{\tg}&=\e^{-2f}|\Phi_g+(g\otimes\grad_g\! f)_0|^2_g\\
&=\e^{-2f}\left(|\Phi_g|^2_g+ 2m X_g(f)+|(g\otimes \grad_g\! f)_0|^2_g\right)\\
&=\e^{-2f}\left(|\Phi_g|^2_g+2mX_g(f)+m|\d f|^2_g\right),
\end{aligned}
\end{equation}
where we have used the identities
\[
|\sym(\d f)|^2_g=2(n+1)|\d f|^2_g\quad \text{and}\quad |g\otimes \grad_g\! f|^2_g=n|\d f|^2_g
\]
as well as
\[
\langle g\otimes \grad_g\! f,\sym (\d f)\rangle_g=2|\d f|^2_g.
\]
Now using \eqref{eq:confchangediv} and \eqref{eq:confchangehiggs} we obtain
\begin{align*}
\mathcal{S}_{\tg}&=\frac{(n-1)(n-2)}{m}|\Phi_{\tg}|^2_{\tg}-2(n-1)\div_{\tg}X_{\tg}\\
&=\e^{-2f}\left[\mathcal{S}_g+2(n-1)(n-2)X_g(f)+(n-1)(n-2)|\d f|^2_g\right.\\
&\phantom{=}\;\left.-2(n-1)(n-2)X_g(f)+2(n-1)\Delta_g f-2(n-1)(n-2)|\d f|^2_g\right]\\
&=\e^{-2f}\left(\mathcal{S}_g+2(n-1)\Delta_g f-(n-1)(n-2)|\d f|^2_g\right),
\end{align*}
as claimed.
\end{proof}
Finding a metric $\tilde{g}$ in a conformal equivalence class $[g]$ which is conformally critical for $\mathcal{E}$ is thus equivalent to solving an equation of Yamabe type $\mathcal{L}_g u=C u^{(n+2)/(n-2)}$,
where 
\[
	\mathcal{L}_g:=4\left(\frac{n-1}{n-2}\right)\Delta_g + \mathcal{S}_g.
\]
\begin{rmk}
We will refer to $\mathcal{L}_g$ as the \emph{projective-conformal Laplacian}.
\end{rmk}

\subsection{The projective scalar curvature}

As an immediate consequence of Stokes' theorem and definition \eqref{eq:defformscalcurv} we have
\begin{prop}
Let $g$ be a Riemannian metric on the closed oriented projective manifold $(M,\proj)$ of dimension $n \geqslant 3$. Then 
\begin{equation}\label{eq:integralformula}
  \int_M\mathcal{S}_g\,d\mu_g
  = \hat{m} \,\Vert\Phi_g\Vert^2_g
  \geqslant 0,
\end{equation}
where $\hat{m}=\tfrac{(n+1)(n-2)}{(n+2)}$, with equality if and only if $\mathfrak{p}$ contains the Levi-Civita connection of $g$. 
\end{prop}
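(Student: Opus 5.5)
The plan is to integrate the definition \eqref{eq:defformscalcurv} of $\mathcal{S}_g$ over $M$ and remove the divergence term with Stokes' theorem. Explicitly,
\[
\int_M\mathcal{S}_g\,d\mu_g=\frac{2(n+1)}{(n+2)}\left[\left(\frac{n}{2}-1\right)\int_M|\Phi_g|^2_g\,d\mu_g-\int_M\div_g(\tr_g\Phi_g)\,d\mu_g\right].
\]
Since $M$ is closed and $\tr_g\Phi_g$ is a smooth vector field, the divergence theorem gives $\int_M\div_g(\tr_g\Phi_g)\,d\mu_g=0$. This is the same fact used in the proof of \cref{thm:maintheoremtwo}. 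What remains is
\[
\frac{2(n+1)}{(n+2)}\cdot\frac{n-2}{2}\,\Vert\Phi_g\Vert^2_g=\frac{(n+1)(n-2)}{(n+2)}\Vert\Phi_g\Vert^2_g=\hat m\,\Vert\Phi_g\Vert^2_g,
\]
which is the claimed identity \eqref{eq:integralformula}.

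For the inequality, note that $n\geqslant 3$ gives $\hat m>0$, and $\Vert\Phi_g\Vert^2_g\geqslant 0$. Hence the right-hand side is non-negative.

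For the equality case, suppose the integral vanishes. Then $\Vert\Phi_g\Vert^2_g=0$, and since $|\Phi_g|^2_g$ is continuous and non-negative, $\Phi_g=(\nabla-\levi)_0$ vanishes identically. By the Lemma of Cartan, Eisenhart and Weyl, $\levi$ is then projectively equivalent to $\nabla$, so $\levi\in\proj$. Conversely, if $\levi\in\proj$, we may take $\nabla=\levi$; this is allowed because $\Phi_g$ does not depend on the choice of representative of $\proj$, again by the same lemma. Then $\Phi_g=0$ and the integral vanishes.

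There is no real obstacle here. The only points needing care are the sign convention for $\div_g$, which does not matter since the integral of the divergence vanishes either way, and the fact that $\hat m>0$ requires $n\geqslant 3$.
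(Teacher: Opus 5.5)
Your proposal is correct and follows the paper's own argument: integrate the definition \eqref{eq:defformscalcurv} of $\mathcal{S}_g$, remove the divergence term by Stokes' theorem, and read off the equality case from $\hat m>0$ for $n\geqslant 3$ together with the Cartan--Eisenhart--Weyl lemma. You simply spell out the details that the paper leaves implicit.
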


Denoting by $\sigma(\mathcal{L}_g)$ the spectrum of $\mathcal{L}_g$, we also conclude:
\begin{prop}\label{prop:spectrum}
Let $(M,\proj)$ be a closed oriented projective manifold and $g$ a Riemannian metric on $M$. Then $\sigma(\mathcal{L}_g)$ is non-negative and moreover, contains $0$ if and only if $\proj$ is metrisable by a metric $\tilde{g}$ in the conformal equivalence class of $g$.
\end{prop}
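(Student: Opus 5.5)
We prove Proposition~\ref{prop:spectrum} through the Rayleigh quotient of the self-adjoint elliptic operator $\mathcal{L}_g$, combined with Lemma~\ref{lem:projectivescalar} and the integral formula \eqref{eq:integralformula}. Since $\mathcal{L}_g$ is a Schrödinger operator on a closed manifold, its spectrum is discrete and bounded below. Its lowest eigenvalue is
\[
\lambda_1=\inf_{u\neq 0}\frac{\int_M \bigl(4\tfrac{n-1}{n-2}|\d u|^2_g+\mathcal{S}_g u^2\bigr)d\mu_g}{\int_M u^2 d\mu_g}.
\]
By the standard argument (replace $u$ by $|u|$, then apply the strong maximum principle), $\lambda_1$ is simple and has an eigenfunction $u>0$ that is smooth. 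It therefore suffices to show $\lambda_1\geqslant 0$, and that $\lambda_1=0$ holds exactly when $\proj$ is metrisable within $[g]$.

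\textbf{Step 1: sign of $\lambda_1$.} Let $u>0$ be a first eigenfunction and put $\tg=u^{4/(n-2)}g$. Lemma~\ref{lem:projectivescalar}, rewritten in the variable $u$ exactly as for the classical scalar curvature, gives
\[
\mathcal{L}_g u=\mathcal{S}_{\tg}\,u^{(n+2)/(n-2)}.
\]
Together with $\mathcal{L}_gu=\lambda_1 u$ this yields $\mathcal{S}_{\tg}=\lambda_1 u^{-4/(n-2)}$. We integrate against $d\mu_{\tg}=u^{2n/(n-2)}d\mu_g$ and apply \eqref{eq:integralformula} to $\tg$:
\[
\lambda_1\int_M u^{2}\,d\mu_g=\int_M\mathcal{S}_{\tg}\,d\mu_{\tg}=\hat m\,\Vert\Phi_{\tg}\Vert^2_{\tg}\geqslant 0 .
\]
Since $u$ is not identically zero, the integral on the left is positive, so $\lambda_1\geqslant 0$. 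Every eigenvalue is at least $\lambda_1$, so $\sigma(\mathcal{L}_g)\subset[0,\infty)$.

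\textbf{Step 2: the case $0\in\sigma(\mathcal{L}_g)$.} Because $\sigma(\mathcal{L}_g)\subset[0,\infty)$, the point $0$ lies in the spectrum only if $\lambda_1=0$. The displayed identity then forces $\Vert\Phi_{\tg}\Vert_{\tg}=0$, so $\Phi_{\tg}\equiv 0$. This means ${}^{\tg}\nabla\in\proj$, i.e.\ $\proj$ is metrisable by $\tg\in[g]$.

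\textbf{Step 3: the converse.} Suppose $\tg=\e^{2f}g\in[g]$ has ${}^{\tg}\nabla\in\proj$. Then $\Phi_{\tg}=0$, so $\mathcal{S}_{\tg}=0$ by definition \eqref{eq:defformscalcurv}. Set $u=\e^{(n-2)f/2}>0$. The transformation law gives $\mathcal{L}_gu=\mathcal{S}_{\tg}u^{(n+2)/(n-2)}=0$, so $0$ is an eigenvalue.

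The main obstacle is converting Lemma~\ref{lem:projectivescalar} into the identity $\mathcal{L}_gu=\mathcal{S}_{\tg}u^{(n+2)/(n-2)}$ for $\tg=u^{4/(n-2)}g$. This conversion is purely formal: the computation of Section~\ref{sub:the_projective_conformal_laplacian} that turns the $S_g$ transformation law into the $L_g$ equation uses only that transformation law. Since $\mathcal{S}_g$ obeys the identical law, the same computation applies verbatim. The only other ingredient is the positivity of the first eigenfunction, which is standard elliptic theory.
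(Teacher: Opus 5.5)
Your proof is correct and follows essentially the same route as the paper. Both arguments use the transformation law $\mathcal{L}_g u=\mathcal{S}_{\tg}u^{(n+2)/(n-2)}$ for $\tg=u^{4/(n-2)}g$, the integral formula \eqref{eq:integralformula} applied to $\tg$, and the positivity of the first eigenfunction. Together these give $\lambda_1\int_M u^2\,d\mu_g=\hat m\,\Vert\Phi_{\tg}\Vert^2_{\tg}\geqslant 0$ and both directions of the equality case. The only cosmetic difference is that the paper first records $\int_M u\,\mathcal{L}_gu\,d\mu_g=\hat m\,\Vert\Phi_{\tg}\Vert^2_{\tg}$ for an arbitrary positive $u$ and then specialises to the first eigenfunction, whereas you substitute the eigenfunction directly.
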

\begin{proof}
For every positive function $u$ on $M$, writing $\tilde{g}=u^{4/(n-2)}g$ and using $\mathcal{S}_{\tilde{g}}=u^{-(n+2)/(n-2)}\mathcal{L}_gu$ as well as $d\mu_{\tilde{g}}=u^{2n/(n-2)}d\mu_g$, we obtain
\[
	\int_Mu\,\mathcal{L}_gu\,d\mu_g=\int_Mu^{2n/(n-2)}\mathcal{S}_{\tilde{g}}\,d\mu_g=\int_M\mathcal{S}_{\tilde{g}}\,d\mu_{\tilde{g}}=\hat{m}\,\Vert\Phi_{\tilde{g}}\Vert^2_{\tilde{g}}\geqslant 0.
\]
Since, by standard elliptic PDE theory, the first eigenfunction of the self-adjoint elliptic operator $\mathcal{L}_g$ can be chosen positive, it follows that the first eigenvalue $\lambda_1$ of $\mathcal{L}_g$ satisfies $\lambda_1\geqslant 0$ and hence the full spectrum is non-negative. If $\lambda_1=0$, the corresponding positive eigenfunction $u$ satisfies $\mathcal{L}_gu=0$, so that $\Vert\Phi_{\tilde{g}}\Vert^2_{\tilde{g}}=0$ for $\tilde{g}=u^{4/(n-2)}g$, meaning $\proj$ contains the Levi-Civita connection of $\tilde{g}\in [g]$. Conversely, if $\tilde{g}=u^{4/(n-2)}g$ metrises $\proj$, then $\Phi_{\tilde{g}}=0$ and hence $\mathcal{L}_gu=u^{(n+2)/(n-2)}\mathcal{S}_{\tilde{g}}=0$, so that $0$ is in the spectrum of $\mathcal{L}_g$.
\end{proof}

Because of the analogy to the scalar curvature, we will refer to $\mathcal{S}_g$ as the \emph{projective scalar curvature} of $g$. For what follows it is advantageous to express $\mathcal{S}_g$ in terms of the usual scalar curvature of $g$, the ``scalar curvature'' of a certain Weyl connection and $\higgs$. It follows from \eqref{eqn:besse} and \eqref{eq:confX} that the torsion-free connection 
\begin{equation}\label{eqn:weylcon}
	\weyl:=\levi+g\otimes X_g-\sym(X_g^{\flat})
\end{equation}
depends only on the conformal equivalence class of $g$. Here $\flat$ is taken with respect to $g$. The connection $\weyl$ on $TM$ is a \emph{Weyl connection} for $[g]$, that is, it is torsion-free and its parallel transport maps are angle preserving with respect to $[g]$. In fact, all Weyl connections for $[g]$ are of the form \eqref{eqn:weylcon} for some metric $g \in [g]$ and some vector field $X$. For a pair $(g,X)$ we write
\[
	S_{g,X}:=\tr_g \mathrm{Ric}\left(\weyl\right),
\]
where $\mathrm{Ric}\left(\weyl\right)$ denotes the Ricci curvature of the Weyl connection $\weyl$ determined by $(g,X)$. A standard calculation -- see \cref{sec:ricci_curvature_of_a_weyl_connection} -- gives:
\begin{equation}\label{eq:weylscal}
  S_{g,X}
  =S_g+2(n-1)\div_g\!X-(n-2)(n-1)|X|^2_g,
\end{equation}
where $S_g$ denotes the scalar curvature of $g$.
\begin{prop}
For the projective scalar curvature we have
\[
	\mathcal{S}_g=S_g-S_{g,X_g}+\hat{m} |\higgs|^2_g.
\]
\end{prop}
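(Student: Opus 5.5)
Write $X:=X_g$. The plan is to compute directly from the definition \eqref{eq:defformscalcurv}, using the orthogonal splitting of $\Phi_g$ and formula \eqref{eq:weylscal}. Since $m=\tfrac{(n+2)(n-1)}{n+1}$, the prefactor satisfies $\tfrac{2(n+1)}{n+2}=\tfrac{2(n-1)}{m}$. We also have $\tr_g\Phi_g=mX$. Hence
\[
\mathcal{S}_g=\tfrac{2(n+1)}{(n+2)}\left(\tfrac{n}{2}-1\right)|\Phi_g|^2_g-2(n-1)\div_g X .
\]

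The first step is to expand the norm using \eqref{eqn:normsplit} together with the conformal invariance $A_g=\higgs$. This gives $|\Phi_g|^2_g=|\higgs|^2_g+m|X|^2_g$. Multiplying by $\tfrac{(n+1)(n-2)}{n+2}=\hat m$, the first term becomes $\hat m|\higgs|^2_g+\hat m m|X|^2_g$. Since $\hat m m=(n-1)(n-2)$, we obtain
\[
\mathcal{S}_g=\hat m|\higgs|^2_g+(n-1)(n-2)|X|^2_g-2(n-1)\div_gX .
\]

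The second step is to apply \eqref{eq:weylscal} with $X=X_g$, which gives
\[
S_g-S_{g,X_g}=-2(n-1)\div_gX+(n-2)(n-1)|X|^2_g .
\]
Comparing this with the previous display yields $\mathcal{S}_g=S_g-S_{g,X_g}+\hat m|\higgs|^2_g$.

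The only real obstacle is the correctness of \eqref{eq:weylscal} itself, including the sign conventions for the Weyl connection \eqref{eqn:weylcon}. The paper defers this to an appendix, and I would take it as given here. If it needs checking, I would compute $\mathrm{Ric}$ of $\levi+B$ with $B=g\otimes X-\sym(X^\flat)$ via the standard formula $R(\levi+B)=R+\d^{\levi}B+B\wedge B$. I would then trace twice, tracking that $\con(B)=-nX^\flat$ and that $\tr_g B=(1-n)X\cdot$(up to the $\sym$ term); this is the one step where sign and constant errors could creep in. The remaining steps are the arithmetic identities $\tfrac{2(n+1)}{n+2}=\tfrac{2(n-1)}{m}$ and $\hat m m=(n-1)(n-2)$.
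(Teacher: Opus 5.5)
Your proposal is correct and follows the paper's proof essentially line by line. You expand \eqref{eq:defformscalcurv} using $\tr_g\Phi_g=mX_g$, split $|\Phi_g|^2_g=|\higgs|^2_g+m|X_g|^2_g$ via \eqref{eqn:normsplit}, use $\hat m m=(n-1)(n-2)$, and compare with \eqref{eq:weylscal}, which the paper likewise proves separately in the appendix.

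There is one harmless slip in your optional verification aside. For $B=g\otimes X-\sym(X^\flat)$ one has $\tr_g B=nX-2X=(n-2)X$, not $(1-n)X$. This does not affect the argument, since you take \eqref{eq:weylscal} as given.
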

\begin{proof}
Using the definition \eqref{eq:defformscalcurv}, \eqref{eq:weylscal} and \eqref{eqn:normsplit}, we compute
\[
\begin{aligned}
\mathcal{S}_g&=\frac{2(n+1)}{(n+2)}\left[\left(\frac{n}{2}-1\right)|\Phi_g|^2_g-\div_g(\tr_g\Phi_g)\right]\\
&=\hat{m} |\Phi_g|^2_g-2(n-1)\div_g(X_g)\\
&=\hat{m} |\higgs|^2_g+\hat{m} m |X_g|^2_g-2(n-1)\div_g(X_g)\\
&=S_g-\big(S_g+2(n-1)\div_g(X_g)-(n-2)(n-1)|X_g|^2_g\big)+\hat{m} |\higgs|^2_g\\
&=S_g-S_{g,X_g}+\hat{m} |\higgs|^2_g,
\end{aligned}
\]
where we also used that $\hat{m} m =(n-2)(n-1)$.
\end{proof}
\begin{rmk}
In terms of $X_g$ and $\higgs$ we thus have
\[
	\mathcal{S}_g=\hat{m} |\higgs|^2_g+(n-2)(n-1)|X_g|^2_g-2(n-1)\div_g(X_g).
\]
\end{rmk}

\section{Existence of conformally critical metrics}

For a Riemannian metric $g$ on the projective manifold $(M,\proj)$ we define:
\begin{defn}
The \emph{conformal defect} of $g$ on $(M,\proj)$ is the difference of the projective -- and standard scalar curvature:
\[
	V_g:=\mathcal{S}_g-S_g=\hat{m}|\higgs|^2_g -S_{g,X_g}.
\]
\end{defn}
\begin{rmk}\leavevmode
\begin{enumerate}
	\item[(i)] Notice that $\mathcal{L}_g=L_g+V_g$, so that $V_g$ measures the defect of the projective-conformal Laplacian to agree with the conformal Laplacian.
	\item[(ii)] Recall that we define the conformal locus of $g$ as
	\[\loc=\left\{p \in M\,|\, V_g(p)=0\right\}.
	\] Under a conformal change $\tilde{g}=\e^{2f}g$, the conformal defect satisfies $V_{\tilde{g}}=\e^{-2f}V_g$. In particular, the conformal locus does only depend on the conformal equivalence class of $g$. 
\end{enumerate}
\end{rmk}

\subsection{Reduction to the Yamabe problem}

In the special case where the conformal locus of $[g]$ is all of $M$, the projective-conformal Laplacian and the conformal Laplacian agree, hence finding a conformally critical metric for $\mathcal{E}$ amounts to solving the Yamabe problem. The affirmative solution to the Yamabe problem \cite{MR125546} by Trudinger \cite{MR240748}, Aubin \cite{MR0433500}, and Schoen \cite{MR788292} thus implies (we also refer to \cite{MR888880} and \cite{MR2148873} for surveys):
\begin{thm}\label{thm:fulllocus}
Let $(M,\proj)$ be a closed oriented projective manifold of dimension $n\geqslant 3$. Suppose the conformal structure $[g]$ on $M$ satisfies $\loc=M$. Then $[g]$ contains a conformally critical metric for $\mathcal{E}$. 
\end{thm}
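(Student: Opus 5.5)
The plan is to reduce the statement directly to the classical Yamabe problem, using the machinery of the previous section. Fix any representative $g\in[g]$. By the remark following the definition of the conformal defect, $V_{\tg}=\e^{-2f}V_g$ for $\tg=\e^{2f}g$. Hence the hypothesis $\loc=M$ means that $V_{\tg}\equiv 0$ for every $\tg\in[g]$. Consequently $\mathcal{S}_{\tg}=S_{\tg}$ for every metric in the conformal class, and $\mathcal{L}_g=L_g+V_g=L_g$.

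Next I would recall the characterisation established above. By \cref{cor:confvar} together with the definition \eqref{eq:defformscalcurv}, a metric $\tg$ is conformally critical for $\mathcal{E}$ if and only if $\mathcal{S}_{\tg}$ is constant. Since $\mathcal{S}_{\tg}=S_{\tg}$ on $[g]$, a metric in $[g]$ is conformally critical precisely when it has constant scalar curvature. Equivalently, writing $\tg=u^{4/(n-2)}g$, we need a smooth positive solution of
\[
L_gu=Cu^{(n+2)/(n-2)}
\]
for some constant $C$.

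The final step is to invoke the solution of the Yamabe problem by Trudinger, Aubin and Schoen. On every closed Riemannian manifold of dimension $n\geqslant 3$, it produces a smooth positive $u$ solving exactly this equation. Orientation and closedness are already part of our hypotheses. The resulting metric $\tg=u^{4/(n-2)}g\in[g]$ has constant $S_{\tg}=\mathcal{S}_{\tg}$ and is therefore conformally critical for $\mathcal{E}$.

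There is no genuine analytic obstacle here, since all of the hard analysis sits inside the cited Yamabe theorem. The only point requiring care is to confirm that the conformal-criticality condition of \cref{cor:confvar} really is equivalent to constancy of $\mathcal{S}_g$, including the correct constant. Integrating \eqref{eq:confcrit}, together with Stokes' theorem and \eqref{eq:integralformula}, shows that any constant value of $\mathcal{S}_g$ must equal $\hat m\Vert\Phi_g\Vert_g^2/\Vol_M(g)$. So the equation $\mathcal{S}_{\tg}=\mathrm{const}$ imposes no additional constraint on the constant $C$ beyond what the Yamabe solution already provides.
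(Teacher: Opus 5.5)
Your proposal is correct and matches the paper's proof: $\loc=M$ forces $V_g\equiv 0$, so $\mathcal{L}_g=L_g$, the criticality equation becomes the Yamabe equation, and the theorem of Trudinger, Aubin and Schoen supplies the solution. Your extra check, that any constant value of $\mathcal{S}_g$ must equal $\hat m\Vert\Phi_g\Vert^2_g/\Vol_M(g)$, is correct and makes explicit a point the paper leaves implicit.
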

\begin{proof}
By assumption $V_g\equiv 0$, so $\mathcal{L}_g=L_g$ and the criticality equation reduces to the Yamabe equation, which has a positive smooth solution by the work of Yamabe, Trudinger, Aubin and Schoen.
\end{proof}

\subsection{Negative conformal defect}\label{ss:negdefect}

We treat in this subsection the case where the conformal defect $V_g$ takes a negative value at some point of $M$. Since $V_{\tg}=\e^{-2f}V_g$ for $\tg=\e^{2f}g$, the hypothesis that $V_g$ is negative at a given point is independent of the choice of representative metric in $[g]$.

\begin{thm}\label{thm:negdefect}
Let $(M,\proj)$ be a closed oriented projective manifold of dimension $n\geqslant 4$ and $[g]$ a conformal structure on $M$. Suppose there exists $p_0\in M$ such that $V_{\tilde{g}}(p_0)<0$ for some (and hence any) $\tilde{g} \in [g]$. Then $[g]$ contains a conformally critical metric for $\mathcal{E}$.
\end{thm}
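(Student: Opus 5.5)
We argue variationally, as in Aubin's approach to the Yamabe problem, for the equation $\mathcal{L}_g u = C u^{(n+2)/(n-2)}$ with $\mathcal{L}_g = L_g + V_g$ and $n\geqslant 4$. Write $p^* = 2n/(n-2)$ and $a_n = 4\frac{n-1}{n-2}$. We introduce the projective Yamabe invariant
\[
\mu_{\proj}([g]) := \inf_{u\in H^1(M)\setminus\{0\}} Q_g(u), \qquad Q_g(u) = \frac{\int_M \big(a_n|\d u|^2_g + \mathcal{S}_g u^2\big)\,d\mu_g}{\big(\int_M |u|^{p^*}d\mu_g\big)^{2/p^*}}.
\]
By \cref{lem:projectivescalar} this quantity is a conformal invariant of $[g]$. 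By the computation in the proof of \cref{prop:spectrum}, $\mu_{\proj}([g])\geqslant 0$; note that for $u>0$ and $\tg=u^{4/(n-2)}g$ we have $Q_g(u)=\mathcal{E}(\tg)\cdot\hat{m}$, so minimisers are exactly conformally critical metrics. The standard Trudinger–Aubin argument (subcritical approximation $p<p^*$, uniform bounds, elliptic regularity and the maximum principle to get $u>0$ smooth) yields a minimiser provided the strict inequality
\[
\mu_{\proj}([g]) < \mu(S^n) = \tfrac{n(n-1)}{4}\cdot a_n\,\omega_n^{2/n}\cdot\tfrac{1}{\,\cdot\,}\ \text{(the round Yamabe constant } K_n^{-2}\text{)}
\]
holds. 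The potential $\mathcal{S}_g$ is smooth, so this compactness step goes through verbatim as in \cite{MR2148873}; if $\mu_{\proj}([g])=0$ the minimiser gives $C=0$, which is still a conformally critical metric, indeed a metrisation. The whole proof thus reduces to producing a test function with $Q_g(u)<K_n^{-2}$.

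For the test function, we choose the representative $g\in[g]$ in conformal normal coordinates at $p_0$, so that $\det g=1$ near $p_0$ and $S_g(p_0)=0$, $|\nabla S_g|(p_0)=0$. We keep $V_g(p_0)<0$, since the sign is conformally invariant. We then take the Aubin bubble $u_\varepsilon(x)=\eta(x)\big(\varepsilon/(\varepsilon^2+|x|^2)\big)^{(n-2)/2}$ centred at $p_0$, with $\eta$ a cutoff. The classical expansion splits the numerator as
\[
\int a_n|\d u_\varepsilon|^2 + S_g u_\varepsilon^2 \quad\text{and}\quad \int V_g u_\varepsilon^2 .
\]
The first piece equals $K_n^{-2}\|u_\varepsilon\|_{p^*}^2 + O(\varepsilon^4|\log\varepsilon|)$ or $O(\varepsilon^4)$ by the Aubin/Lee–Parker expansion; at worst it is $o(\varepsilon^2)$ for $n\geqslant 5$ and $O(\varepsilon^2)$-free up to log factors. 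The second piece is $\varepsilon^2 V_g(p_0)c_n+o(\varepsilon^2)$ with $c_n>0$ for $n\geqslant 5$, and $\varepsilon^2|\log\varepsilon|\,V_g(p_0)c_4$ for $n=4$, because $\int u_\varepsilon^2$ is of order $\varepsilon^2$ ($n\geqslant5$) or $\varepsilon^2|\log\varepsilon|$ ($n=4$). Since $V_g(p_0)<0$, the potential term dominates the geometric error and $Q_g(u_\varepsilon)<K_n^{-2}$ for small $\varepsilon$.

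The main obstacle is this comparison of orders. In dimensions $n\geqslant 6$ one can bypass conformal normal coordinates, because the error from the geometry is $\varepsilon^2 S_g(p_0)\cdot\mathrm{const}$ and the relevant potential is the full $\mathcal{S}_g(p_0)-\frac{n-2}{4(n-1)}S_g(p_0)$-type combination. In dimensions $4$ and $5$, however, one must ensure that the geometric error is genuinely of higher order than $\varepsilon^2$ (resp. $\varepsilon^2|\log\varepsilon|$). I would first try conformal normal coordinates, which kill $S_g(p_0)$, and cite the Lee–Parker expansion. I would check carefully that in $n=4$ the $|\log\varepsilon|$ gain from $\int u_\varepsilon^2$ beats the $O(\varepsilon^2)$ remainder; this is precisely why $n=3$ is excluded, since there $\int u_\varepsilon^2\sim\varepsilon$ is comparable to the mass term and would require the positive mass theorem.
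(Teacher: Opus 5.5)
Your proposal is correct and is essentially the paper's argument. The paper rewrites the equation as $\Delta_g u+hu=\lambda u^{(n+2)/(n-2)}$ with $h=\tfrac{n-2}{4(n-1)}\mathcal{S}_g$ and invokes the Aubin-type criterion $h(p_0)<\tfrac{n-2}{4(n-1)}S_g(p_0)$, $n\geqslant 4$, from Druet--Hebey; that criterion rests on exactly the subcritical-approximation and bubble test-function argument you unpack, though the paper gives no computation and only cites it.

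There is one normalisation slip to fix. With $a_n$ included in the numerator of $Q_g$, the threshold is $\mu(S^n)=a_nK_n^{-2}=n(n-1)\omega_n^{2/n}$ (your displayed formula for it is garbled), so you must show $Q_g(u_\varepsilon)<a_nK_n^{-2}$, not $Q_g(u_\varepsilon)<K_n^{-2}$. Correspondingly, the local quantity whose sign decides the outcome in your normalisation is $\mathcal{S}_g(p_0)-S_g(p_0)=V_g(p_0)$.
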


\begin{proof}
By \cref{sub:the_projective_conformal_laplacian}, finding a metric $\tg\in[g]$ which is conformally critical for $\mathcal{E}$ is equivalent to finding a smooth positive function $u$ on $M$ that solves $\mathcal{L}_gu=Cu^{(n+2)/(n-2)}$ for some constant $C\geqslant 0$. Setting $h:=\tfrac{n-2}{4(n-1)}\mathcal{S}_g$ and $\lambda:=\tfrac{n-2}{4(n-1)}C$, this rewrites as the elliptic equation of Yamabe type
\[
\Delta_gu+h\,u=\lambda\,u^{(n+2)/(n-2)}
\]
in the sense of \cite{MR2148873}. The hypothesis $V_g(p_0)<0$ becomes 
\[
h(p_0)<\tfrac{n-2}{4(n-1)}S_g(p_0),
\]
that is, the linear term of the equation drops below the linear term of the standard Yamabe equation at the point $p_0$. Existence of a smooth positive solution then follows from \cite[Thm.~4.1]{MR2148873} and the discussion thereafter.
\end{proof}

Combining \cref{thm:fulllocus} and \cref{thm:negdefect} we have:

\maintheoremthree*

\begin{rmk}
\cref{thm:negdefect} covers, in particular, the empty-locus subcase $V_g < 0$ on $M$. The complementary empty-locus subcase $V_g > 0$ on $M$ remains open.
\end{rmk}

\appendix

\section{Ricci curvature of a Weyl connection}\label{sec:ricci_curvature_of_a_weyl_connection}

Since the formula for the Ricci curvature of a Weyl connection does not seem to be easy to locate in the literature, we include the calculation.

Let $\nabla$ be a torsion-free connection on $TM$ with curvature tensor $R=(R^{i}_{jkl})$ and let $\Phi=(\Phi^i_{jk})$ be a $1$-form on $M$ with values in the endomorphisms of $TM$ satisfying $\Phi(v)(w)=\Phi(w)(v)$ for all $v,w \in T_pM$ and all $p \in M$. Then the torsion-free connection $\hat{\nabla}=\nabla+\Phi$ has curvature
\[
\hat{R}^i_{jkl} = R^i_{jkl} + \nabla_k \Phi^i_{lj} - \nabla_l \Phi^i_{kj} + \Phi^u_{lj} \Phi^i_{ku} - \Phi^u_{kj} \Phi^i_{lu}
\]
as follows from an elementary computation. In particular, for the Ricci curvature $\hat{R}_{jk}=\hat{R}^l_{jlk}$ of $\hat{\nabla}$ we obtain
\[
\hat{R}_{jk}=R_{jk}+\nabla_l\Phi^l_{jk}-\nabla_k\Phi^l_{jl}+\Phi^u_{jk}\Phi^l_{ul}-\Phi^u_{jl}\Phi^l_{ku}.
\]

For $(g,X)$ consider the Weyl connection 
\[
\hat{\nabla}:=\levi+g\otimes X-\sym(X^{\flat}).
\]
Take $\Phi=g\otimes X-\sym(X^{\flat})$ and $\nabla={}^g\nabla$ so that
\[
\Phi^i_{jk}=X^ig_{jk}-\delta^i_j\xi_k-\delta^i_k\xi_j,
\]
where $\xi_i=g_{ij}X^j$. Then we have
\[
\nabla_l\Phi^l_{jk}=\nabla_l\left(X^lg_{jk}-\delta^l_j\xi_k-\delta^l_k\xi_j\right)=\div_g(X) g_{jk}-\nabla_j\xi_k-\nabla_k\xi_j
\]
Likewise, we obtain
\[
\nabla_k\Phi^l_{jl}=-n\nabla_k\xi_j
\]
as well as
\[
\Phi^u_{jk}\Phi^l_{ul}=2n\xi_j\xi_k-n|X|^2_gg_{jk}
\]
and
\[
\Phi^u_{jl}\Phi^l_{ku}=(n+2)\xi_j\xi_k-2|X|^2_gg_{jk}.
\]
From this we compute for the Ricci curvature $\hat{R}_{jk}$ of the Weyl connection $\tilde{\nabla}$
\[
\begin{aligned}
\hat{R}_{jk}&=R_{jk}+\div_g(X)g_{jk}-\nabla_j\xi_k-\nabla_k\xi_j+n\nabla_k\xi_j\\
&\phantom{=}+2n\xi_j\xi_k-n|X|_g^2g_{jk}-(n+2)\xi_j\xi_k+2|X|_g^2g_{jk}\\
&=R_{jk}+(n-2)(\nabla_{(j}\xi_{k)}+\xi_j\xi_k-|X|_g^2g_{jk})+\div_g(X)g_{jk}-\tfrac{n}{2}(\d\xi)_{jk},
\end{aligned}
\]
where $R_{jk}$ denotes the Ricci curvature of $\nabla$, round brackets on indices denote symmetrisation and square brackets denote anti-symmetrisation. In particular, the symmetric and antisymmetric parts of the Ricci curvature are
\[
\hat{R}_{(jk)} = R_{jk} + (n-2)\left(\nabla_{(j}\xi_{k)} + \xi_j\xi_k - |X|_g^2g_{jk}\right) + \div_g(X) g_{jk}
\]
and
\[
\hat{R}_{[jk]} = -\tfrac{n}{2}\,(\d\xi)_{jk}.
\]
From this we compute
\[
S_{g,X}=S_g+2(n-1)\div_g(X)-(n-2)(n-1)|X|_g^2,
\]
where $S_{g,X}:=\tr_g\Ric(\hat{\nabla})$ and $S_g$ denotes the scalar curvature of $g$.

\bibliography{ref}
\bibliographystyle{tmsalpha}

\end{document}